\newtheorem{theorem}{Theorem}[section]
\newtheorem{proposition}[theorem]{Proposition}
\newtheorem{assumption}[theorem]{Assumption}
\newtheorem{lemma}[theorem]{Lemma}
\theoremstyle{definition}
\newtheorem{definition}[theorem]{Definition}
\theoremstyle{remark}
\newtheorem{remark}[theorem]{Remark}
\numberwithin{equation}{section}
\renewcommand{\Re}{\operatorname{Re}}
\newcommand{\la}{\lambda}
\def\Z{{\mathbb Z}}
\def\R{{\mathbb R}}
\newcommand{\norm}[1]{\left\|#1\right\|}
\newcommand{\T}{\mathbb{T}}
\renewcommand{\emptyset}{\varnothing}
\newcommand{\eps}{\varepsilon}
\newcommand{\one}{\mathbf{1}}
\title[]{On the effect of randomization on supercritical heat equations}
\author{Eliseo Luongo}
\address{Fakult\"at f\"ur Mathematik, Universit\"at Bielefeld, D-33501 Bielefeld, Germany}
\email{\href{mailto:eluongo at math.uni-bielefeld.de}{eluongo at math.uni-bielefeld.de}}
\date\today
\begin{document}

	\begin{abstract}
		Recently, in \cite{glogic2025non}, it has been shown that the focusing power nonlinearity heat equation
\begin{equation}\label{Eq:Heat_abstract}\tag{NLH}
			\partial_t u -\Delta u =  |u|^{p-1}u, \quad p>1,
		\end{equation}
		in dimensions $d \geq 3$ has non-unique local solutions in $L^q(\mathbb{R}^d)$ for $q < d(p-1)/2$ provided that $p < p_{JL}$, where $p_{JL}$ denotes the Joseph-Lundgren exponent. In this paper we investigate the effect of different randomizations on the well-posedness of the equation. First we show that adding a forcing term white in time and colored in space in \eqref{Eq:Heat_abstract} is not sufficient to improve the solution theory: namely, we prove non-uniqueness for local-in-time mild solutions of \eqref{Eq:Heat_abstract} with additive noise. Second, we discuss how randomizing the initial conditions of \eqref{Eq:Heat_abstract} affects its well-posedness.
	\end{abstract}
	
	\maketitle
	\section{Introduction}
	
	\noindent Consider the Cauchy problem for the focusing power nonlinearity heat equation
	\begin{equation}\label{Eq:Heat}
		\begin{cases}
			\partial_t u -\Delta u =  |u|^{p-1}u, \\
			u(0,\cdot)=u_0,
		\end{cases}
	\end{equation}
	where $u=u(t,x)\in \R$, $(t,x) \in [0,\infty) \times \R^d$, $d \geq 3$ and $p>1+\frac{2}{d}$.

	\begin{definition}\label{def_mild}
		Let $1 \leq q < \infty$, $u_0 \in L^q(\R^d)$ and $T>0$. By a \emph{mild $L^q$-solution} to \eqref{Eq:Heat} on the time interval $[0,T)$ 
		we call a function 
		\begin{equation*}
			u \in C([0,T),L^q(\R^d)) \cap L^p_{{loc}}((0,T)\times \R^d)
		\end{equation*}
		that is a distributional solution to \eqref{Eq:Heat} on $(0,T)\times \R^d$ and for which $u(0,\cdot)=u_0$.
	\end{definition}
    In the context of well-posedness theory for \eqref{Eq:Heat}, an important role is played by the exponent
    \begin{equation}\label{Def:q_c}
		q_c:= \frac{d(p-1)}{2}
	\end{equation}
    as deeply discussed in \cite{QuiSou19} and recalled in the following.\\
    It is known from the work of Weissler \cite{Wei80} that if $q > q_c$ or $q = q_c > 1$, then for arbitrary $u_0 \in L^q(\R^d)$ there exists $T>0$ and a mild $L^q$-solution to \eqref{Eq:Heat} on $[0,T)$. Furthermore, uniqueness in the space $C([0,T),L^q(\R^d))$ holds as soon as $q>q_c,\ q\geq p$ or $q\geq q_c,\ q>p$, see \cite{Wei80, BreCaz96}. However, neither of the techniques from the aforementioned two papers applies to $q=q_c=p$. In fact, uniqueness was later shown to fail in this case by Terraneo \cite{Ter02}. When $q < q_c$ it is not, in general, possible to associate with every $u_0 \in L^q(\R^d)$ a mild $L^q$-solution to \eqref{Eq:Heat}. Indeed, we refer to \cite{Wei80} for a non-existence result under the assumption of non-negativity of local solutions. In contrast, it is known that in certain cases there are initial data that lead to multiple mild solutions. The earliest result goes back to \cite{HarWei82}, who showed non-uniqueness for all $1 \leq q<q_c$ when the power $p$ is sufficiently small, belonging to the range
    \begin{equation}\label{Eq:Haraux_range}
		1 + \frac{2}{d} < p < 1+\frac{4}{d-2}.
	\end{equation}
    The non-uniqueness of \cite{HarWei82} follows by the existence of a non-trivial rapidly decaying expanding self-similar solution for \eqref{Eq:Heat}, which thereby arises from zero initial datum. Expanding self-similar profiles with rapid decay exist, however, only in the energy-subcritical range \eqref{Eq:Haraux_range}. Moreover, the construction of \cite{HarWei82} is very special since it allows one to prove only non-uniqueness arising from the trivial initial condition. Motivated by these reasons, recently in \cite{glogic2025non} we showed that non-uniqueness (from non-zero initial data) holds for the following range of powers
	\begin{equation}\label{Eq:Range}
		1+\frac{2}{d} < p < p_{JL},
	\end{equation}
	where $p_{JL}$ stands for the so-called Joseph-Lundgren exponent
	\begin{equation*}
		p_{JL}:=
		\begin{cases}
			\infty & \text{if }~3 \leq d \leq 10,\\
			1+ \displaystyle{\frac{4}{d-4 - 2 \sqrt{d-1}}} & \text{if }~d \geq 11. 
		\end{cases}
	\end{equation*}
	More precisely, we established the following result.
	\begin{theorem}{\cite[Theorem 1.2, Remark 1.5]{glogic2025non}}\label{Thm:main_deterministic}
		Assume $d \geq 3$ and let $p$ satisfy \eqref{Eq:Range}. Then for any $1 \leq q < q_c$
		there exists a non-trivial radial initial datum $u_0 \in L^q(\R^d)$ and $r>q_c$ sufficiently large depending on $p,d,q $ such that, for each radial initial datum $v_0 \in L^q(\R^d)\cap L^{r}(\R^d)$, there is a time $T>0$ for which there are two different mild $L^q$-solutions to \eqref{Eq:Heat} on $[0,T)$ starting from $u_0+v_0$.
	\end{theorem}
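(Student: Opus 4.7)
The plan is to split any candidate mild $L^q$-solution $u$ of \eqref{Eq:Heat} with initial datum $u_0 + v_0$ as $u = v + w$, where $v$ is the unique regular solution emanating from $v_0$ and $w$ satisfies a perturbed version of \eqref{Eq:Heat} with initial datum $u_0$. Non-uniqueness of $u$ is then equivalent to non-uniqueness of $w$, which one establishes by re-running the non-uniqueness construction of \cite{glogic2025non} with $v$ playing the role of a smooth, lower-order perturbation.

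First, since $v_0 \in L^q(\R^d) \cap L^r(\R^d)$ with $r > q_c$, Weissler's well-posedness theorem \cite{Wei80} produces a unique mild $L^r$-solution $v \in C([0,T_0), L^r(\R^d))$ on some interval $[0, T_0)$, which is also a mild $L^q$-solution by interpolation with $v_0 \in L^q$. Taking $r$ large enough, parabolic smoothing (combined with the radial symmetry of $v_0$, inherited by $v$) ensures that $v$ is smooth and bounded on $[\tau, T_0) \times \R^d$ for every $\tau \in (0, T_0)$, with good quantitative control in terms of $\|v_0\|_{L^q \cap L^r}$.

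Next, writing $u = v + w$, the function $w$ must solve
\[
\partial_t w - \Delta w = |v+w|^{p-1}(v+w) - |v|^{p-1}v, \qquad w(0,\cdot) = u_0.
\]
Expanding the right-hand side around $w = 0$ produces a leading $|w|^{p-1}w$ term plus lower-order perturbations in which $v$ appears as a bounded smooth coefficient on $[\tau, T_0)$. The deterministic non-uniqueness argument of \cite{glogic2025non} builds two distinct mild solutions of \eqref{Eq:Heat} from $u_0$ by a fixed-point construction performed around a singular stationary profile of the form $c|x|^{-2/(p-1)}$, whose stability is precisely what restricts $p$ to the range $p < p_{JL}$. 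The same construction applied to the above perturbed equation, viewed as a small smooth modification of \eqref{Eq:Heat} for positive times, yields two distinct solutions $w_1 \neq w_2$ starting from $u_0$; setting $u_i := v + w_i$ for $i=1,2$ then produces two different mild $L^q$-solutions to \eqref{Eq:Heat} starting from $u_0 + v_0$ on some short interval $[0, T)$.

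The main obstacle is checking that the contraction and stability estimates underlying the non-uniqueness construction of \cite{glogic2025non} survive in the presence of $v$. This requires quantitative control of $v$ in the weighted, self-similar spaces employed there, which in turn forces $r$ to be chosen sufficiently large (so that $v$ is uniformly bounded away from $t=0$ and integrable against the weight induced by the singular profile) and the time horizon $T$ to be small depending on $\|v_0\|_{L^q \cap L^r}$. Once $v$ is identified as a genuinely lower-order term relative to $|w|^{p-1}w$ in those spaces, the fixed-point scheme of \cite{glogic2025non} goes through with only routine modifications and two distinct branches of $w$ are obtained.
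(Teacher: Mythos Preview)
Your proposal is in the right spirit but takes an unnecessary detour, and it mischaracterizes the core mechanism in a way that matters.

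First, two terminological slips that are not cosmetic. The object one perturbs around is not a ``singular stationary profile'' but a \emph{forward self-similar expander} $\bar u(t,x)=t^{-1/(p-1)}\bar U(|x|/\sqrt t)$ coming from Proposition~\ref{Prop:Har-Wei}; and the range $p<p_{JL}$ is relevant because the linearization $L_{\bar\alpha}$ in similarity variables has an \emph{unstable} eigenvalue (Theorem~\ref{thm_spectrum}), not because of any stability. These are precisely the inputs to the ancient-solution construction of Theorem~\ref{thm:existence_ancient_solutions}, which produces the second branch $\psi$. Your description (``fixed-point construction around a singular stationary profile, whose stability\ldots'') does not match this.

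Second, and more substantively, the route in \cite{glogic2025non} (mirrored in Section~\ref{proof_non_unique} here with the stochastic convolution $z$ playing the role of a regular perturbation) does \emph{not} first solve the full nonlinear equation for $v$ starting from $v_0$ and then re-run the self-similar machinery with $v$ as a coefficient. Instead one keeps the self-similar analysis entirely unperturbed: the two global-in-space branches $\tilde u_1=\bar u$ and $\tilde u_2=\bar u+t^{-1/(p-1)}\psi(\ln t,\cdot/\sqrt t)$ are fixed once and for all from Theorems~\ref{thm_spectrum}--\ref{thm:existence_ancient_solutions}. One then sets $u_i=\tilde u_i-w_i$ and absorbs \emph{both} the spatial localization of $\tilde u_0$ and the perturbation $v_0$ into the initial datum of the correction, $w_0=\tilde u_0-u_0-v_0\in L^r_{rad}$. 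The $w_i$ are obtained by a contraction in the space $Z^{T}$ of Proposition~\ref{non_linear_singular_pde_noise} (with $z\equiv 0$ in the deterministic case); the only place $v_0$ enters is through $\|w_0\|_{L^r}$, which merely affects the size of $T$. Since the spectral theory and the ancient solution are untouched, nothing needs to be ``re-checked'' at that level.

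Your decomposition $u=v+w$ forces the similarity-variable analysis to be redone for an equation with an extra time-dependent, non-self-similar coefficient $v$, which is exactly what the paper avoids. It can be made to work (essentially by repeating the estimates of Section~\ref{subsec_loc} with $v$ in place of $z$, after first controlling $v$ in $C([0,T];L^{r,pr})$ via Weissler), but this is strictly more work than simply shifting $v_0$ into $w_0$.
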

    The non-uniqueness of \autoref{Thm:main_deterministic} follows by the adaptation of the method introduced in \cite{JiaSve15} to show non-uniqueness of Leray-Hopf solutions for the unforced incompressible 3d Navier-Stokes equations. This approach is based on the assumption that there exists a forward self-similar solution that is linearly unstable in similarity variables. However, while for the 3d Navier-Stokes equation only numerical evidence of this holds, see \cite{GuiSve23}, we showed that this assumption is true for the equation \eqref{Eq:Heat} for the range of powers $p$ given by \eqref{Eq:Range}. We refer to \autoref{thm_spectrum} for the rigorous statement of the above claim.
    The initial profile $u_0$ constructed in \cite{glogic2025non} is singular, satisfying
\begin{equation}\label{form_IC}
			u_0(x) = \frac{C}{|x|^{\frac{2}{p-1}}} \quad \text{for some} \quad C>0,
		\end{equation}
    near $0$, it is uniformly bounded elsewhere and $C^2(\R^d\setminus\{0\})$. \autoref{Thm:main_deterministic} also says that the non-uniqueness from the singular initial data $u_0$ is robust enough that the non-uniqueness mechanism is preserved in the case of \emph{regular} perturbations, $v_0$, of the singular initial condition, $u_0$.  In this paper, we go further in this direction to analyze the robustness of the non-uniqueness mechanism described above and to understand whether possible randomizations of \eqref{Eq:Heat} may improve its well-posedness theory. Indeed, if the pathological behavior of \eqref{Eq:Heat} occurs only along exceptional trajectories and is not generic, a small random perturbation can move the system away from them, thereby restoring uniqueness. The above heuristic had several applications in the last 40 years both in finite and infinite dimensional systems: we refer to the classical works \cite{zvonkin1974transformation, veretennikov1981strong} and the more recent results \cite{flandoli2010well,DP_regular_1, DP_regular_2,burq2008random, Priola, bertacco2023weak, beck2019stochastic, Saleh, coghi2023existence, herr2023three,agresti2024global} including the lecture notes \cite{flandoli2011random} for a review of this topic. This approach has as a final aim the possibility, by letting the intensity of the noise approach $0$, to select some special solutions among the non-unique ones of the deterministic system. Even if this program seems very appealing, it is really hard to see it in practice. We are aware of very few examples where it has been completely settled \cite{bafico1982small, attanasio2009zero,  delarue2014transition, delarue2014noise, delarue2019zero,crippa2025zero}.\\
In this paper we follow two different approaches in order to randomize \eqref{Eq:Heat} with the aim to obtain a regularization by noise phenomenon. In the first case, in the same spirit of \cite{zvonkin1974transformation, veretennikov1981strong, DP_regular_1,DP_regular_2, Priola} we study \eqref{Eq:Heat} with additive noise, proving that \eqref{Eq:Heat} perturbed by additive noise, white in time and colored in space, still suffers from the same non-uniqueness mechanism of \autoref{Thm:main_deterministic}. We refer to \autoref{intro_additive_noise} for the rigorous presentation of the main result in this framework. Secondly, we decided to study \eqref{Eq:Heat} without adding any additional random force but providing a randomization of its initial conditions in the spirit of \cite{burq2008random, burq2008randomII, zhang2011random, pocovnicu2017almost}. In this case we are able to
construct a unique local mild solution for a large set of initial data, we refer to \autoref{intro_random_initial_cond} for the rigorous presentation of the main result in this framework.

    \subsection{The case of additive noise}\label{intro_additive_noise}
As discussed above, our first aim is to study the failure of local well-posedness in $L^q(\R^d),\ q<q_c$ for the stochastic evolution equation on $\mathbb{R}^d$
\begin{align}\label{stochastic_heat}
    \begin{cases}
        d u&=(\Delta u+\lvert u\rvert^{p-1}u)dt +dW_t\\
        u(0)&=u_0.
    \end{cases}
\end{align}    
Above $W_t$ is an infinite dimensional Brownian motion on a filtered probability space  $(\Omega,\mathcal{F},$ $(\mathcal{F}_t)_{t\geq 0},\mathbb{P})$. Similarly to the deterministic case, we are interested in studying mild solutions of \eqref{stochastic_heat}. For the sake of presentation, we postpone this definition, pretty analogous to \autoref{def_mild}, to \autoref{sec_preliminaries} below. 
In \autoref{proof_non_unique} we will be able to prove the following.
\begin{theorem}\label{Thm:main_additive_noise}
  Assume $d \geq 3$, \autoref{HP_noise_1} and let $p$ satisfy \eqref{Eq:Range}. Then for any $1 \leq q < q_c$
		there exists a non-trivial initial datum $u_0 \in L^q(\R^d)$ and a stopping time $\mathcal{T}>0$ for which there are two different mild $L^q$-solutions to \eqref{stochastic_heat} on $[0,\mathcal{T}]$.
\end{theorem}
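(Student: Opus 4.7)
The plan is to reduce the stochastic problem to a pathwise random PDE via a Da Prato--Debussche decomposition $u = v + Z$, where $Z_t = \int_0^t e^{(t-s)\Delta}\, dW_s$ is the stochastic convolution with $Z_0 = 0$. Under the colored-in-space hypothesis on $W$ (\autoref{HP_noise_1}), the process $Z$ has good spatial regularity, with $Z \in C([0,T]; L^\infty(\R^d) \cap L^{r}(\R^d))$ almost surely for any $r \geq q$, and vanishes at $t=0$. The remainder $v = u - Z$ then satisfies, pathwise, the random deterministic equation
\begin{equation*}
    \partial_t v - \Delta v = |v+Z|^{p-1}(v+Z), \qquad v(0,\cdot) = u_0.
\end{equation*}
Thus it suffices to produce, for $\mathbb{P}$-a.e.\ realization of $Z$, two distinct mild $L^q$-solutions $v^{(1)}, v^{(2)}$ of this PDE on $[0,\mathcal{T}]$ starting from the singular datum $u_0$ of \autoref{Thm:main_deterministic}.

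The second step is a pathwise adaptation of the non-uniqueness mechanism of \cite{glogic2025non}. With $u_0$ as in \eqref{form_IC}, that paper constructs two radial mild solutions to the unforced equation: the expanding self-similar solution $U(t,x) = t^{-1/(p-1)}\Phi(x/\sqrt{t})$ and a second solution of the form $U + w$ obtained via the unstable manifold of $\Phi$ in similarity variables, whose existence is guaranteed by the spectral content of \autoref{thm_spectrum}. For the forced equation satisfied by $v$, the nonlinearity differs from $|v|^{p-1}v$ by $F(v,Z) := |v+Z|^{p-1}(v+Z) - |v|^{p-1}v$, a term that vanishes with $Z$. I would introduce the stopping time
\begin{equation*}
    \mathcal{T} := \inf\bigl\{ t \geq 0 : \|Z_t\|_{L^\infty(\R^d) \cap L^{r}(\R^d)} > \eps \bigr\} \wedge T_0,
\end{equation*}
for $\eps > 0$ a small deterministic constant and $T_0$ the deterministic non-uniqueness time supplied by \autoref{Thm:main_deterministic}; since $Z_0 = 0$ and $Z$ is pathwise continuous, $\mathcal{T} > 0$ almost surely. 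On $[0,\mathcal{T}]$ the forcing $F(v,Z)$ is small in the relevant norms, and a contraction argument centered around each of the two deterministic profiles $U$ and $U + w$ should yield distinct solutions $v^{(1)}$ and $v^{(2)}$ of the random PDE. Setting $u^{(i)} = v^{(i)} + Z$ then gives the two required mild $L^q$-solutions of \eqref{stochastic_heat}.

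The main obstacle is carrying out this perturbation argument in the presence of the singularity of $U$ at the origin and the instability of $\Phi$: the linearization lives in similarity variables $y = x/\sqrt{t}$, $\tau = \log t$, where the random forcing acquires exponential factors in $\tau$ that need to be balanced against the unstable eigenvalue of \autoref{thm_spectrum}. The right framework is to run the fixed-point argument in the weighted spaces already adapted to the unstable manifold in \cite{glogic2025non}, exploiting $Z_0 = 0$ and the smoothing of the heat semigroup to gain a factor $t^\alpha$ for some small $\alpha > 0$ in the size of the forcing. Choosing $\eps$ (and, if needed, an additional deterministic cutoff in $\mathcal{T}$) small enough so that this gain dominates the unstable rate, the contraction closes and produces two solutions whose distance at positive time is comparable to the size of the unstable mode rather than to the forcing, and is therefore strictly positive on $(0,\mathcal{T}]$, giving the non-uniqueness claimed.
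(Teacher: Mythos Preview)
Your plan is essentially the paper's: Da Prato--Debussche decomposition $u=v+z$, then a pathwise fixed-point around each of the two deterministic non-unique profiles, with a stopping time controlling the size of $z$. A few points where your sketch diverges from, or underspecifies relative to, the actual argument are worth flagging.

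First, under \autoref{HP_noise_1} one does not get $z\in L^\infty(\R^d)$; what is actually available (and sufficient) is $z\in C([0,T];L^{q,pr}_{rad})$ with $z(0)=0$, so your stopping time should be set in that norm. Second, the paper does not pass to similarity variables for the stochastic correction; the fixed point is run in physical variables in the weighted space
\[
Z^{T'}=\Bigl\{w\in L^\infty((0,T'),L^r_{rad}):\ \sup_{t}t^{\frac{d(p-1)}{2rp}}\|w(t)\|_{L^{pr}_{rad}}<\infty,\ \lim_{t\to0}t^{\frac{d(p-1)}{2rp}}\|w(t)\|_{L^{pr}_{rad}}=0\Bigr\},
\]
using the linear solution map for $\partial_t-\Delta-p|\bar u|^{p-1}$ from \cite{glogic2025non}. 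The exponential-in-$\tau$ balancing you describe is not needed here; the relevant smallness comes from $z(0)=0$ together with the subcritical exponent $1-\tfrac{d(p-1)}{2r}>0$. Third, the paper perturbs around the \emph{non-localized} self-similar profiles $\tilde u_i$ and solves for a correction $w_i$ with initial datum $\tilde u_0-u_0$ (the cut-off tail), rather than perturbing around the already-localized deterministic solutions as you suggest; the two decompositions are equivalent in principle, but the paper's choice lets one reuse the linear estimates directly. Finally, you omit a genuinely nontrivial step: the fixed point only yields $v^{(i)}\in Z^{T'}$, and a separate localization/Gr\"onwall argument (the paper's \autoref{Continuity_uniform_bound}) is needed to upgrade this to $u^{(i)}\in C([0,\mathcal T],L^q(\R^d))$ and to check adaptedness, which is part of the definition of mild solution. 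The separation $u_1\neq u_2$ is then read off in $L^r$ as $t\to0$ using the lower bound on $\psi$ from \autoref{thm:existence_ancient_solutions} against the uniform $L^r$ bound on $w_1-w_2$.
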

\begin{remark}\label{rmk_more_general_IC}
The $u_0$ we will construct in \autoref{Thm:main_additive_noise} is of the same form as \eqref{form_IC}. Similarly to \autoref{Thm:main_deterministic}, as will be apparent from the proof, the non-uniqueness mechanism we exhibit persists under perturbations of $u_0$ that are radial and in $L^q \cap L^r$ for certain $r>q_c$. Note, however, that such perturbations do not remove the singular behavior near zero.  
\end{remark}
The non-uniqueness of \autoref{Thm:main_additive_noise} is again based on the existence of a forward self-similar solution that is linearly unstable in similarity variables for the deterministic unforced equation in the spirit of \cite{JiaSve15}. Therefore, \autoref{Thm:main_additive_noise} shows that the non-uniqueness mechanism of \autoref{Thm:main_deterministic} persists even in the presence of stochastic forcing, as long as the noise is sufficiently regular in space, this being the only assumption imposed in \autoref{HP_noise_1}. In particular, the addition of a stochastic forcing, does not spoil any of the parameters range available in our previous deterministic result. Even if this goes in the opposite direction to the papers quoted above, it is not completely surprising. Indeed, in the last few years several deterministic non-uniqueness results, in particular those based on convex integration techniques for equations in fluid mechanics \cite{buckmaster2015anomalous,isett2018proof, buckmaster2019nonuniqueness}, have been extended to the stochastic framework, allowing either for the additive structure of the noise as in \eqref{stochastic_heat}, see \cite{hofmanova2022ill,hofmanova2023nonuniqueness,hofmanova2024global,lu2025proof}, or  multiplicative ones, see \cite{berkemeier20233d, hofmanova2024global,koley2025non,yamazaki2023non}. In this sense, \cite{hofmanova2024non,brue2023non} are the most similar results compared to \autoref{Thm:main_additive_noise}. They are based on the non-uniqueness for Leray-Hopf solutions for the 3d Navier-Stokes equations \cite{AlbBruCol22}, proving that the same holds for forced 3d stochastic Navier-Stokes equation with additive or linear multiplicative noise. However, some differences arise. Similarly to \cite{AlbBruCol22}, either \cite{hofmanova2024non} and \cite{brue2023non} require adding a further external forcing to produce the non-uniqueness. Moreover, the additional forcings differ from that of \cite{AlbBruCol22}, being in both cases proper stochastic processes. This is due to the fact that, while 
here we do not force the unstable self-similar profile constructed in \cite{glogic2025non} to be a solution of \eqref{stochastic_heat}, see \autoref{sec:localization}, this is done in \cite{brue2023non, hofmanova2024non}, leading to a different forcing for each realization of the noise. Ultimately, also this difference seems caused by the fact that we were able in \cite{glogic2025non} to verify the assumptions of the program proposed by Jia and {\v{S}}ver{\'a}k in \cite{JiaSve15} without the necessity to add an additional external forcing.\\

A natural conjecture, linked to the well-known motto \emph{the rougher the noise, the better the regularization}, is that the failure of the regularization by noise mechanism and therefore the persistence of the non-uniqueness for \eqref{stochastic_heat} is hidden in \autoref{HP_noise_1}. It could be the case that a rougher noise does not allow us to study \eqref{nonlinear_pde} and ultimately to restore uniqueness for \eqref{stochastic_heat}.

    \subsection{The case of randomization in the initial conditions}\label{intro_random_initial_cond}
    The non-uniqueness in \eqref{Eq:Heat} of \autoref{Thm:main_deterministic} is \emph{typical} of nonlinear PDEs settled in supercritical spaces for which several ill-posedness phenomena, like non-uniqueness or instantaneous lack of regularity, are possible \cite{ChrColTao03,lebeau2005perte}. However, the functions for which one can rigorously prove such a pathological behavior are, usually, highly non-generic and one would expect that the ill-posedness is indeed a non generic phenomenon. Therefore, a Gaussian measure on the space of initial conditions should be able to recognize if there is a lack of the known ill-posedness results, telling us if they are \emph{generic} in terms of initial conditions or not. Starting from the seminal works \cite{burq2008random, burq2008randomII} the above heuristic has been applied in several contexts, we refer for example to \cite{nahmod2013almost,zhang2011random, pocovnicu2017almost,luhrmann2014random} and the references therein for several applications of this principle. In the context of \autoref{Thm:main_deterministic}, given $u_0$ as the one in the statement, one can only show that for each $\epsilon>0$ and radial initial condition ${u}_{0,*}\in L^q(\R^d)$, there exists $v_0$ radial and smooth such that \begin{align*}
        \norm{{u}_{0,*}-u_0-v_0}_{L^q}\leq \epsilon
    \end{align*} 
    and \eqref{Eq:Heat} with initial condition $u_0+v_0$ has non-unique solutions. Motivated by the above considerations, we are interested in studying \eqref{Eq:Heat} with random initial conditions.\\
    Similarly to \cite{luhrmann2014random,pocovnicu2017almost} we randomize functions directly on Euclidean space via a unit-scale decomposition in frequency space.
    More precisely, let $\varphi\in C^{\infty}_c(\R^d)$ be a real-valued function such that $0 \leq \varphi\leq 1,\ \varphi(\xi)=\varphi(-\xi)$ and
\begin{align*}
    \varphi(\xi)=\begin{cases} 
        1  \quad &\text{for }  \xi\in [-1/2,1/2]^d\\
        0 \quad &\text{for }  \xi\notin [-1,1]^d.
        \end{cases}
\end{align*}
For every $k\in \Z^d$ set $\varphi_k(\xi)=\varphi(\xi-k)$ and define
\begin{align}\label{def_multipliers}
    \psi_k(\xi)=\frac{\varphi_k(\xi)}{\sum_{l\in \Z^d}\varphi_l(\xi)}.
\end{align}
Then $\psi_k$ is a smooth function with support contained in $k+[-1,1]^d$ and $\psi_k(\xi)=\psi_{-k}(-\xi),$ $ \sum_{k\in \Z^d}\psi_k(\xi)\equiv 1\ \forall\xi \in \R^d.$\\
Secondly, let $\mathcal{I}\subseteq \Z^d$ be such that $\mathbb{Z}^d=\mathcal{I}\cup (-\mathcal{I})\cup \{0\},\ \mathcal{I}\cap  (-\mathcal{I})=\emptyset $ and 
$\{h^1_k,h^2_k\}_{k\in \{0\}\cup \mathcal{I}}$ be i.i.d. standard real Gaussian random variables $\mathcal{F}_0$-measurable so that $h_k=h^1_k+ih^2_k$ is a complex Gaussian random variable $\mathcal{F}_0$-measurable. Set also $h_k=\overline{h_{-k}}$ for $k\in -\mathcal{I}.$ Lastly, for $f\in \mathscr{S}'(\R^d)$ let us define
\begin{align}\label{fourier_k_multiplier}
    P_k f:=\mathcal{F}^{-1}\left(\psi_k(\xi)\hat{f}(\xi)\right)\in \mathscr{S}'(\R^d).
\end{align}
Above, we denoted by $\hat{f}$ the Fourier transform of $f$, which is well-defined since $f$ is a tempered distribution and by $\mathcal{F}^{-1}$ the inverse Fourier transform.  \\
Having this notation in mind we define the randomization of $f$, denoted by $f^{\omega}$ 
\begin{align}\label{randomization}
    f^{\omega}=\sum_{k\in \Z^d}h_k(\omega)P_k f.
\end{align}
The definition of randomization of $f$ above is strongly linked to the notion of modulation spaces, $M^{q_1,q_2}(\R^d)\ 1\leq q_1,q_2\leq +\infty$, for which we refer to \cite{feichtinger1983modulation} for a review and standard results. Indeed, not surprisingly, if $f\in M^{q,q}(\R^d),\ 1\leq q\leq 2$ then $f^{\omega}$ above is well-defined and satisfies suitable integrability properties. We refer to \autoref{reg_initial_condition} below for details. We just recall here, for the sake of presentation of \autoref{Fixed_point_random_initial_cond}, that $M^{2,2}(\R^d)=L^2(\R^d)$ and $M^{q,q}(\R^d)\hookrightarrow L^q(\R^d)$ for $1\leq q<2.$\\
Now we are ready to state our main result for \eqref{Eq:Heat} with randomized initial conditions. Denoting by $e^{t\Delta}$ the heat semigroup, in \autoref{Random_data_well_posed} we shall prove the following result.
\begin{theorem}\label{Fixed_point_random_initial_cond}
    Let $d\geq 3,\ p>\frac{1}{2}+\sqrt{\frac{1}{4}+\frac{4}{d}},\ q\in \left(2-\frac{8}{4+dp},q_c\right)$, $r=(p\vee p')q_c$,
    \begin{align*}
        u_0\in \begin{cases}
            M^{q,q}(\R^d) &\quad \text{if }q< 2;\\
            L^{2}(\R^d)\cap L^q(\R^d)&\quad \text{if }q\geq 2
        \end{cases}
    \end{align*}
     and $u_0^{\omega}$ the randomization of $u_0$ given by \eqref{randomization}. Then there is an event $\Sigma\subseteq \Omega$ with $\mathbb{P}(\Sigma)= 1$ and a stopping time $\mathcal{T}$, such that $\mathcal{T}>0\ \mathbb{P}-a.s.$ and for any $\omega\in \Sigma$, the problem \eqref{Eq:Heat} with initial condition $u_0^{\omega}$
has a unique mild solution $u$ for $0 \leq t \leq  \mathcal{T}(\omega)$, satisfying 
\begin{align*}
    u-e^{t\Delta}u_0^{\omega}\in C([0,\mathcal{T}(\omega)];L^{q}(\R^d)\cap L^r(\R^d)).
\end{align*}
Moreover, $u(t\wedge\mathcal{T})$ is an adapted process with values in $L^q(\R^d)$. Lastly, there exist constants $C^1_{d,p,r}, C^2_{d,p,q,r}>0$ depending on their subscripts and,
for every $0<T\leq 1$, an event $\Omega_T$ with the property
\begin{align*}
    \mathbb{P}(\Omega_T)\geq \begin{cases}
    1- C^1_{d,p,r} e^{-\frac{C^2_{d,p,q,r}}{T^{\frac{1}{p}\wedge\left(\frac{2}{p-1}-\frac{d}{r-p+1}\right)}\norm{u_0}_{M^{q,q}}^2}}  &\quad \text{if }q< 2,
    \\    1- C^1_{d,p,r} e^{-\frac{C^2_{d,p,r}}{T^{\frac{1}{p}}\norm{u_0}_{L^2}^2}} &\quad \text{if }q\geq 2,
    \end{cases}
\end{align*}
such that, for every $\omega\in \Omega_T$, it holds $\mathcal{T}(\omega)\geq T.$
\end{theorem}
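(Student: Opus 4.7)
The plan is to follow the classical Bourgain--Burq--Tzvetkov splitting: write the unknown as $u=z+v$, where $z(t):=e^{t\Delta}u_0^\omega$ is the linear evolution of the randomized initial datum and $v$ solves the (random) remainder equation
\begin{equation*}
    \partial_t v-\Delta v = |v+z|^{p-1}(v+z), \qquad v(0)=0.
\end{equation*}
All the randomness lives in $z$, which enjoys strictly better integrability than a generic element of the supercritical space of $u_0$ thanks to the i.i.d.\ Gaussian coefficients $h_k$. The remainder $v$ will then be constructed pathwise by a Banach fixed point in the space $C([0,T];L^q(\R^d)\cap L^r(\R^d))$. Because $u_0^\omega$ is $\mathcal{F}_0$-measurable, $z$ and hence $v$ and $u$ inherit $\mathcal{F}_0$-measurability for every $t$, so the adaptedness claim is automatic once existence and uniqueness are established.

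\textbf{Step 1: probabilistic estimates on the free evolution.} The first task is to upgrade the low-regularity datum $u_0$ to a good stochastic linear evolution. The key input, essentially the content of \autoref{reg_initial_condition}, is a large-deviations bound of the form
\begin{equation*}
    \mathbb{P}\bigl(\|e^{t\Delta}u_0^\omega\|_{L^p((0,T);L^r(\R^d))}>\lambda\bigr)\;\leq\; C_{d,p,r}\,\exp\!\bigl(-c_{d,p,q,r}\,\lambda^2/(T^{\gamma}\norm{u_0}_{\star}^2)\bigr),
\end{equation*}
where $\|\cdot\|_{\star}$ is $M^{q,q}$ or $L^2$ according to the regime $q<2$ or $q\geq 2$, and $\gamma$ matches the exponent $\frac{1}{p}\wedge(\frac{2}{p-1}-\frac{d}{r-p+1})$ or $\frac{1}{p}$ appearing in the theorem. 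The derivation rests on three standard ingredients: expanding $u_0^\omega=\sum_k h_k P_k u_0$, applying Minkowski's inequality to exchange the $L^s_\omega$ norm with $L^p_tL^r_x$ (for $s\geq p\vee r$), and then using the Khintchine--Kahane inequality to collapse the Gaussian sum into the square function $\bigl(\sum_k|e^{t\Delta}P_k u_0|^2\bigr)^{1/2}$. The near-orthogonality of the unit-scale Fourier projectors $P_k$ together with heat smoothing $\|e^{t\Delta}\|_{L^{q}\to L^{r}}\lesssim t^{-\frac{d}{2}(\frac{1}{q}-\frac{1}{r})}$ then converts the square function back into a modulation / $L^2$ norm of $u_0$, which produces the time weight $T^\gamma$.

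\textbf{Step 2: deterministic fixed point for $v$.} With $\omega$ frozen, define the Duhamel map
\begin{equation*}
    \Phi(v)(t):=\int_0^t e^{(t-s)\Delta}\,|(v+z)(s)|^{p-1}(v+z)(s)\,\dd s,
\end{equation*}
and set it up on the complete metric space
\begin{equation*}
    X_T^R:=\bigl\{v\in C([0,T];L^q(\R^d)\cap L^r(\R^d))\,:\,\|v\|_{L^\infty_TL^q}+\|v\|_{L^\infty_TL^r}\leq R\bigr\}.
\end{equation*}
Using the pointwise bound $|a+b|^{p-1}|a+b|\lesssim|a|^p+|b|^p$, Hölder in space (exploiting $r=(p\vee p')q_c$, which is the precise exponent making both the $L^q\to L^q$ and $L^{r/p}\to L^r$ smoothing integrable in time), and the heat-semigroup decay estimates, one obtains
\begin{equation*}
    \|\Phi(v)(t)\|_{L^q\cap L^r}\;\lesssim\;\int_0^t(t-s)^{-\beta}\bigl(\|v(s)\|_{L^r}^{p}+\|z(s)\|_{L^r}^{p}\bigr)\,\dd s,
\end{equation*}
with $\beta<1$ exactly under the constraints $p>\tfrac12+\sqrt{\tfrac14+\tfrac{4}{d}}$ and $q>2-\tfrac{8}{4+dp}$. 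An analogous difference estimate for $\Phi(v_1)-\Phi(v_2)$ with a factor $\|v_1\|^{p-1}+\|v_2\|^{p-1}+\|z\|^{p-1}$ in the integrand yields contraction. Choosing $R\sim\|z\|_{L^p_TL^r_x}^{p}T^{1-\beta}$, the map $\Phi$ maps $X_T^R$ into itself and is a strict contraction whenever $T$ is smaller than an explicit power of $\|z\|_{L^p_TL^r_x}^{-1}$.

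\textbf{Step 3: definition of $\mathcal{T}$, tail bounds, and conclusion.} Define $\mathcal{T}(\omega)$ as the supremum of times $T$ for which the contraction threshold of Step~2 is met. The pathwise fixed-point construction gives a unique $v\in C([0,\mathcal{T}];L^q\cap L^r)$, hence a unique mild solution $u=z+v$ of \eqref{Eq:Heat} with initial datum $u_0^\omega$ in the declared class. Combining the smallness condition on $\|z\|_{L^p_TL^r_x}$ with the subgaussian estimate of Step~1 translates directly into the displayed lower bound on $\mathbb{P}(\mathcal{T}\geq T)$. Sending $T\downarrow 0$ and invoking Borel--Cantelli yields $\mathcal{T}>0$ almost surely, producing the full-measure set $\Sigma$. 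The $\mathcal{F}_0$-measurability of $h_k$ propagates through every step of the fixed point, so the adaptedness of $u(\cdot\wedge\mathcal{T})$ is immediate.

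\textbf{Main obstacle.} The functional-analytic skeleton (Duhamel fixed point plus Khintchine) is essentially routine; what requires real care is the bookkeeping of Lebesgue indices. The simultaneous requirements that (i) $r$ be large enough for parabolic smoothing in the nonlinear term to be locally integrable in time, (ii) $q$ be small enough to reach deeply into the supercritical regime while still accommodating the randomized free evolution in $L^p_tL^r_x$, and (iii) the exponent $\gamma$ governing the Gaussian tail to remain strictly positive, all conspire to produce the quadratic-in-$p$ threshold $p>\tfrac12+\sqrt{\tfrac14+\tfrac{4}{d}}$ and the lower bound $q>2-\tfrac{8}{4+dp}$. Verifying these algebraic constraints and tracking how they interact with the split modulation-space/$L^2$ framework is where the delicate work lies.
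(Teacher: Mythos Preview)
Your overall strategy—the splitting $u=z+v$, Khintchine-type bounds on $z$, and a pathwise fixed point for $v$—matches the paper's. However, there is a real gap in your Step~2. The displayed estimate
\[
\|\Phi(v)(t)\|_{L^q\cap L^r}\;\lesssim\;\int_0^t(t-s)^{-\beta}\bigl(\|v(s)\|_{L^r}^{p}+\|z(s)\|_{L^r}^{p}\bigr)\,\dd s
\]
cannot hold for the $L^q$ component: the nonlinearity lies in $L^{r/p}$, and since $r=(p\vee p')q_c>pq$ for every $q<q_c$, one has $r/p>q$, so on $\R^d$ the heat semigroup does not map $L^{r/p}$ into the smaller space $L^q$. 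You therefore cannot recover $\|\Phi(v)\|_{L^q}$ from $L^r$ data alone. Relatedly, the claim that ``$\beta<1$ exactly under the constraints $p>\tfrac12+\sqrt{\tfrac14+\tfrac4d}$ and $q>2-\tfrac{8}{4+dp}$'' is misplaced: for the $L^r$ part one has $\beta=\tfrac{d(p-1)}{2r}<1$ simply because $r>q_c$; the constraints on $p,q$ enter only through the randomization estimates on $z$ (specifically, for $q<2$, through the embedding $M^{q,q}\hookrightarrow L^q\hookrightarrow H^{-s_0}$ with $s_0=\tfrac{d(2-q)}{2q}<\tfrac2p$, a reduction you do not mention).

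The paper handles this by running the contraction \emph{only} in $C([0,\hat T];L^r)$, where $L^{r/p}\to L^r$ smoothing is available with an integrable time kernel; the case split $q\geq 2$ versus $q<2$ arises here because for $q<2$ one must insert explicit time weights $t^\gamma$ into the estimates on $z$ to satisfy the integrability condition of the randomization lemma. Membership $v\in C([0,\mathcal T];L^q)$ is then recovered \emph{a posteriori}, via a localization/Gr\"onwall argument with cutoffs $\chi_{x_0,R}$ and the interpolation $\|v\|_{L^{pq}}\leq\|v\|_{L^q}^{\theta}\|v\|_{L^r}^{1-\theta}$, mirroring the lemma used in the additive-noise section. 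This two-step structure is essential, not cosmetic; a direct fixed point in $L^q\cap L^r$ could in principle be made to work, but it would require exactly the interpolation bookkeeping you have omitted, together with separate control of $\|z\|_{L^{pq}}$ (with time weights when $q<2$)—none of which follows from the single estimate you wrote.
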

Some remarks are in order.
\begin{remark}
    Our restriction on $q$, namely the fact that $q>2-\frac{8}{4+dp}$ and we cannot reach the full range of parameters $q\in [1,q_c)$ seems linked to similar issues appearing for randomization in Navier-Stokes equations, see for example \cite[Theorem 2.4]{nahmod2013almost}, \cite[Theorem 1.1]{deng2011random}, where no arbitrarily low Sobolev regularity can be reached. Indeed, even working in the $d$-dimensional torus, the space $H^{-\frac{d}{2}}(\T^d)$, having the same Sobolev index as $L^1(\T^d)$, is not reached by the quoted papers. Note in particular that the interval $\left(2-\frac{8}{4+dp},q_c\right)$ is not empty as soon as $p>\frac{1}{2}+\sqrt{\frac{1}{4}+\frac{4}{d}}$. This is the reason behind the stronger restriction on $p$ in the statement compared to the more standard $p>1+\frac{2}{d}$.
\end{remark}
\begin{remark}
    In the case of $q>2 $ we require also $u_0\in L^2(\R^d)$. This is a technical assumption that allows us to make sense to the series appearing in \eqref{randomization} and to define $u_0^{\omega}$ as a random variable with values in $L^2(\R^d)$. As \autoref{reg_initial_condition} below shows, $u_0^{\omega}\in L^{q}(\R^d)$ in this case without assuming $u_0\in L^q(\R^d)$. We preferred to state the result in this suboptimal way in order to be somehow coherent to our purpose of studying the Cauchy problem of \eqref{Eq:Heat} in $L^q(\R^d)$. Note that this assumption is satisfied by the singular profile $u_0$ in \autoref{Thm:main_deterministic} according to \eqref{form_IC}.
\end{remark}
\begin{remark}
    We have the embedding $W^{d\left(\frac{2}{q}-1\right),q}(\R^d)\hookrightarrow M^{q,q}(\R^d)$ for $q<2$, \cite{sugimoto2007dilation,toft2004continuity}. Moreover, the $u_0$ of \autoref{Thm:main_deterministic} belongs to $W^{\theta,q}(\R^d)$ for $\theta<\frac{d}{q}-\frac{2}{p-1}$ according to \eqref{form_IC}. In particular it is a proper initial condition for \autoref{Fixed_point_random_initial_cond} if \begin{align*}
        p\geq 3 \quad\text{or }\frac{(d-2)(p-1)}{(3-p)}>q.
    \end{align*}
    This interval is included in the whole $\left(2-\frac{8}{d+dp},q_c\right)$ as soon as $p\geq 1+\frac{4}{d}$. The latter is slightly more restrictive than $p>\frac{1}{2}+\sqrt{\frac{1}{4}+\frac{4}{d}}$ assumed in \autoref{Fixed_point_random_initial_cond}.
\end{remark}
The idea behind the proof, similarly to \cite{burq2008random}, is that, besides the fact that the randomization does not allow to gain differentiability, see \cite[Appendix B]{burq2008random}, the $L^r$ properties are (almost surely) much better than expected. This improvement allows the use of a fixed point method after having singled out the linear evolution. This is easily transparent in case of $q\geq 2$ in view of \autoref{reg_initial_condition} and \autoref{further_integrability_rmk} below, slightly less in the case of $q<2$. Indeed, to the best of our knowledge, all previous quoted results in the same spirit of \autoref{Fixed_point_random_initial_cond} were established by exploiting the effect of the randomization in a Hilbert space framework. In case $q<2$, however, due to embedding $M^{q,q}(\R^d)\hookrightarrow L^q(\R^d)\hookrightarrow H^{-\frac{d(2-q)}{2q}}(\R^d)$ \emph{cf.} \cite{sugimoto2007dilation,toft2004continuity}, we can still recover a Hilbert space framework where it is possible to exploit the regularizing properties of the randomization \eqref{randomization} and, at the same time, work with $L^q$ mild solutions for \eqref{Eq:Heat}.\\

We regard \autoref{Fixed_point_random_initial_cond} as a complementary result to \autoref{Thm:main_additive_noise}. According to \autoref{Fixed_point_random_initial_cond}, the non-uniqueness scenario described in \autoref{Thm:main_deterministic} appears to be non-generic, and the introduction of suitably chosen stochastic forcings should be able to regularize the system. However, as \autoref{Thm:main_additive_noise} shows, such stochastic forcings cannot be chosen too naively. A more precise regularity threshold for the additive noise, or alternatively more refined noise structures tailored to the features of \eqref{Eq:Heat}, would be required. As already mentioned, the ultimate goal of this program would be to establish selection criteria for the non-unique solutions of \eqref{Eq:Heat}. We consider this to be a problem of very high interest. Indeed, although \eqref{Eq:Heat} is a simplified model for nonlinear parabolic systems, we believe that the techniques needed to address the questions above could significantly enhance our understanding of how noise interacts with nonlinear parabolic equations and, ultimately, contribute to a deeper understanding of the dynamics of more complex and physically relevant partial differential equations.
\subsection{Plan of the paper}    
The content of the paper is as follows. In \autoref{sec_preliminaries} we settle the notation that we follow throughout the paper, recalling, for the convenience of the reader, some results from our previous paper \cite{glogic2025non} and on modulation spaces. The proof of \autoref{Thm:main_additive_noise} is the content of \autoref{proof_non_unique}, while the proof of \autoref{Fixed_point_random_initial_cond} is the content of \autoref{Random_data_well_posed}.
\section{Preliminaries and Notation}\label{sec_preliminaries}
\subsection{Notation and Conventions}\label{FA_not}
Let us start by introducing an appropriate functional framework.
For $p \in [1,+\infty]$,
we denote by $L^{p}(\R^d)$ the space of Lebesgue functions, by $C_c^{\infty}(\R^d)$ the standard test space of smooth and compactly supported functions on $\R^d$, by $\mathscr{S}(\R^d)$ the space of Schwartz functions on $\R^d$ and by $\mathscr{S}'(\R^d)$ the dual of $\mathscr{S}(\R^d)$, namely the space of tempered distributions. Let us also settle some standard notation for Sobolev spaces. For $s\in \R,\ p\in [1,+\infty]$ we denote by $W^{s,p}(\R^d)$ the Sobolev–Slobodeckij space of differentiability $s$ and integrability $p$. In the case of $p=2$ we simply write $H^{s}(\R^d)$ in place of $W^{s,2}(\R^d)$. We refer to the classical reference \cite{triebel} for a detailed discussion and standard results on Sobolev spaces.

As already announced in \autoref{intro_random_initial_cond}, we also need to introduce modulation spaces on $\R^d$. Let  $P_k:\mathscr{S}'(\R^d)\rightarrow \mathscr{S}'(\R^d)$ be the linear operator introduced in \eqref{fourier_k_multiplier}. For $p,q\in [1,+\infty],\ s\in \R$ we define the modulation space $M^{p,q}_s(\R^d)$ as the space of tempered distributions $f\in \mathscr{S}'(\R^d)$ such that the norm
\begin{align}\label{modulation_norm}
\norm{f}_{M^{p,q}_s}=\left(\sum_{k\in \Z^d}\left(1+\lvert k\rvert^2\right)^{\frac{sq}{2}}\norm{P_k f}_{L^p}^q\right)^{1/q}
\end{align}
is finite, with the usual modification if $q=+\infty$. Modulation spaces are Banach spaces, moreover $\mathscr{S}(\R^d)$ is dense in $M^{p,q}_s$ whenever $p\vee q<+\infty.$ The definition is quite similar to that of Besov spaces, the main difference being that, in the latter, the frequency space is decomposed into dyadic annuli. When $s=0$, we simply write $M^{p,q}(\R^d)$ in place of $M^{p,q}_0(\R^d)$ in case of $s=0.$ We refer to \cite{feichtinger1983modulation} for a detailed discussion on modulation spaces and their standard properties including the embedding $M^{p,p}(\R^n)\hookrightarrow L^{p}(\R^n)$ for $p<2$, which we shall employ below.

Secondly, in order to deal with \autoref{Thm:main_additive_noise}, we need to introduce the framework of radial functions, also to accommodate the results from \cite{glogic2025non} recalled in \autoref{results_previous_paper}.
For $p \geq 1$, we define the space of radial Lebesgue functions
	\begin{equation*}
		L^p_{rad}(\R^d):= \{ f : [0,\infty) \rightarrow \mathbb{C}~ \vert~ f \text{ is measurable and } \| f \|_{L^p_{rad}(\R^d)} := \| f(|\cdot|) \|_{L^p(\R^d)} < \infty  \}.
	\end{equation*}
	We also need the spaces 
	\begin{align*}
		C_{c,rad}^\infty(\R^d) &:= \{ f : [0,\infty) \rightarrow \mathbb{C} ~ \vert ~ f(|\cdot|) \in C_c^{\infty}(\R^d)  \},\\
		\mathscr{S}_{rad}(\R^d)&:= \{ f : [0,\infty) \rightarrow \mathbb{C} ~ \vert ~ f(|\cdot|) \in \mathscr{S}(\R^d)  \}.   
	\end{align*}
	For convenience, we will often shortly write $L^p$ for $L^p (\R^d)$ and analogously for the other function spaces defined in this section. Now, for $1\leq \eta\leq \gamma$ we define the radial intersection Lebesgue space\footnote{We decided, similarly to \cite{glogic2025non}, to use the suggestive notation $L^{\eta,\gamma}$, hoping it will not cause confusion with the more standard usage in the context of Morrey or Lorentz spaces. }
	\begin{equation*}
		L^{\eta,\gamma}:=L^\eta_{{rad}} (\R^d) \cap L^\gamma_{{rad}} (\R^d), \quad \| \cdot \|_{L^{\eta,\gamma}} := \| \cdot \|_{L^\eta(\R^d)} + \mathbf{1}_{(0,\infty)}(\gamma-\eta)\| \cdot \|_{L^\gamma(\R^d)}.
	\end{equation*}
	Note that $L^{\eta,\eta}=L_{rad}^\eta$. Although we define spaces of radial functions on $\R^d$ via their radial profiles, for convenience we will at times interpret them as defined on $\R^d$ via the identification $f(x)=f(|x|)$ for $x \in \R^d$.

	Given a closed linear operator $(L,\mathcal{D}(L))$ on a Banach space $X$,  we denote by $\rho(L)$ the resolvent set of $L$, while $\sigma( L):= \mathbb{C} \setminus \rho( L)$ stands for the spectrum of $ L$.\\ Given a Hilbert space $U$, we denote by $\gamma(U,X)$ the space of $\gamma$-radonifying operators between $U$ and $X.$
    For estimates, we use the convenient asymptotic notation $a \lesssim b$ to say that there is some $C>0$ such that $a \leq Cb$. Sometimes, when it is obvious from the context, we omit explicitly mentioning the parameters on which the choice of the implied constant $C$ does not depend. To emphasize the dependence of $C$ on a parameter, say $p$, we will sometimes write $\lesssim_p$.
	
\subsection{Assumptions on the noise}\label{Noise_not}
Let us begin by introducing a complete filtered probability space $(\Omega,\mathcal{F},(\mathcal{F}_t)_{t\geq 0},\mathbb{P})$ with right continuous filtration such that $\mathcal{F}_0$ contains all $\mathbb{P}$-negligible sets. For the Brownian motion $W$ appearing in \eqref{stochastic_heat}, we assume the following:
\begin{assumption}\label{HP_noise_1}
		Let $U$ be a separable Hilbert space and $B_t$ be cylindrical Brownian motion on $U$,
        \begin{align*}
            W_t=JB_t,\ J\in \gamma(U;L^q_{rad}(\R^d))\cap \gamma(U;W^{s,q})\quad \text{for some }s=\left(\frac{d}{q}\left(1-\frac{2q}{p(p-1)(d+2q)}\right)-1\right)\vee 0 .
        \end{align*}
        
	\end{assumption}
	Under these assumptions on $W,$ according to \cite[Theorem 10.18]{van2008stochastic_lec}, there exists a unique mild (equivalently weak) solution to the stochastic heat equation on $\R^d$
	\begin{align}\label{Eq:Stoch_lin_heat}
		\begin{cases}
			dz&=\Delta zdt +dW_t\\
			z(0)&=0.
		\end{cases}
	\end{align}
	Namely there exists a unique $z\in C_{loc}([0,+\infty);L^q_{rad}(\R^d))$ solving the evolution problem \eqref{Eq:Stoch_lin_heat}.
	It is given by the mild formula \begin{align*}
	    z(t)=\int_0^t e^{(t-s)\Delta}dW_s,
	\end{align*}
    where we denote by $e^{t\Delta}$ the heat semigroup on $\R^d.$
Moreover, according to \cite[Chapter 10.3]{van2008stochastic_lec}, \cite[Theorem 1.1]{van2008stochastic}, \cite[Theorem 1.2]{Veraar}, due to the regularizing properties of the heat semigroup and the Sobolev embedding $W^{s+1,q}(\R^d)\hookrightarrow L^{\frac{p(p-1)(d+2q)}{2}}(\R^d)$ we also have
\begin{align}\label{eq:regularity_stoch_conv}
    z\in C_{loc}([0,+\infty);L^{q,\theta}) \quad \forall \theta<\frac{p(p-1)(d+2q)}{2}\quad\mathbb{P}-a.s.
\end{align}
Note that this also implies that $z\in L^p_{loc}((0,+\infty)\times \R^d)\quad\mathbb{P}-a.s.$

With these definitions at hand, similarly to \autoref{def_mild}, we can introduce the notion of mild solution for equation \eqref{stochastic_heat}.
\begin{definition}\label{stoc_mild}
		Let $u_0\in L^q(\R^d)$ and $\mathcal{T}$ a stopping time. A progressively measurable process $u:\Omega\times [0,+\infty)\rightarrow L^q(\R^d)$ is a \emph{mild solution} of \eqref{stochastic_heat} on $[0,\mathcal{T}]$ if $u$ belongs to $C([0, \mathcal{T}]; L^q(\R^d))\cap L^p_{loc}((0,\mathcal{T})\times \R^d)$ $\mathbb{P}$-a.s., $u(0)=u_0$ $\mathbb{P}$-a.s. and
        $v:=u-z$ is a distributional solution to
		\begin{align*}
        \partial_t v -\Delta v =  |v+z|^{p-1}(v+z).
		\end{align*}
        on $(0,\mathcal{T})\times \R^d $ $\mathbb{P}$-a.s.
\end{definition}

\subsection{Useful results from \cite{glogic2025non}}\label{results_previous_paper}
As already mentioned in \autoref{intro_additive_noise}, the non-uniqueness of \autoref{Thm:main_additive_noise} is a consequence of the existence of a forward self-similar solution that is linearly unstable in similarity variables for the deterministic unforced equation. \\
A radial forward self-similar solution for \eqref{Eq:Heat} corresponds to a self-similar profile 
\begin{align}\label{ss_profile}
    u(t,r)=\frac{1}{t^{\frac{1}{p-1}}}U\left(\frac{r}{\sqrt{t}}\right),
\end{align}
with $U$ satisfying the nonlinear ODE
\begin{align}\label{ODE}
    U'' + \left(\frac{d-1}{\rho}+\frac{\rho}{2}  \right)U' + \frac{1}{p-1} U + |U|^{p-1}U = 0.
\end{align}
According to \cite{HarWei82}, the above problem has a unique solution, up to fixing $U(0):=\alpha>0,\ U'(0)=0$. More precisely, the following holds.
	\begin{proposition}[\cite{HarWei82}]\label{Prop:Har-Wei}
		Let $d \geq 3$ and $p>1+\frac{2}{d}$. Then for every $\alpha > 0$ there exists a unique function $U \in C^2_{loc}([0,\infty))$ that satisfies \eqref{ODE} on $(0,\infty)$ classically and $U(0):=\alpha>0,\ U'(0)=0$. Moreover, $U$ is bounded and $\lim_{\rho \rightarrow \infty} \rho^{\frac{2}{p-1}}U(\rho)$  exists and is finite.
	\end{proposition}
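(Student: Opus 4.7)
The plan is to proceed in three steps: local existence and uniqueness near the regular singular point $\rho=0$, global extension with a uniform $L^\infty$ bound, and finally the sharp polynomial decay at infinity. For the first step, I would convert the ODE into an integral equation via the integrating factor $\mu(\rho):=\rho^{d-1} e^{\rho^2/4}$, which satisfies
\begin{equation*}
\bigl(\mu(\rho)\,U'(\rho)\bigr)' = -\mu(\rho)\Bigl(\tfrac{U(\rho)}{p-1} + |U(\rho)|^{p-1}U(\rho)\Bigr).
\end{equation*}
Using $U'(0)=0$ and integrating twice yields a Volterra equation whose kernel, although nominally singular because of the factor $1/\mu$ at the origin, is locally integrable once paired with the inner integral (which vanishes at the right order near $0$). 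A Banach fixed-point argument on $C([0,\rho_0])$ for $\rho_0=\rho_0(\alpha,p)$ small then gives local existence, uniqueness, and $C^2_{\mathrm{loc}}$-regularity, and a standard continuation argument extends the solution to a maximal interval.

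Second, multiplying the ODE by $U'$ produces the monotonicity identity
\begin{equation*}
\frac{d}{d\rho}\Bigl[\tfrac{1}{2}(U')^2 + \tfrac{U^2}{2(p-1)} + \tfrac{|U|^{p+1}}{p+1}\Bigr] = -\Bigl(\tfrac{d-1}{\rho}+\tfrac{\rho}{2}\Bigr)(U')^2 \leq 0.
\end{equation*}
Hence the Lyapunov functional is bounded by its value at $0^+$, which depends only on $\alpha$ and $p$; this yields $|U(\rho)|\leq C(\alpha,p)$ uniformly, rules out blow-up, and gives global existence on $[0,\infty)$. Integrating the identity on $[0,\infty)$ also provides the weighted coercive bound $\int_0^\infty \rho\,(U'(\rho))^2\, d\rho < \infty$, which plays a role in the next step.

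Third, for the sharp asymptotic I would introduce $V(\rho):=\rho^{2/(p-1)}U(\rho)$. Thanks to the algebraic identity $\tfrac{1}{p-1}=\tfrac{\alpha}{2}$ with $\alpha:=2/(p-1)$, the potential $U/(p-1)$ cancels exactly against the cross term $\tfrac{\rho}{2}\cdot(-\alpha\rho^{-\alpha-1}V)$ produced by the drift, and after multiplication by $\rho^{\alpha}$ one obtains
\begin{equation*}
V'' + \Bigl(\tfrac{d-1-2\alpha}{\rho} + \tfrac{\rho}{2}\Bigr)V' + \tfrac{1}{\rho^{2}}\bigl(\alpha(\alpha+2-d)\,V + |V|^{p-1}V\bigr) = 0.
\end{equation*}
For large $\rho$ the drift $\tfrac{\rho}{2}V'$ dominates; combining the $L^\infty$-bound on $U$ from Step 2, the weighted bound $\int \rho(U')^2 d\rho<\infty$, and a phase-plane/shooting analysis (in the spirit of Haraux--Weissler) for the $V$-equation, one deduces that $V(\rho)$ converges to a finite limit as $\rho\to\infty$.

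The hard part will be Step 3: upgrading the bare $L^\infty$-bound on $U$ to the sharp polynomial decay $U(\rho)=O(\rho^{-2/(p-1)})$. The formal dominant balance $\tfrac{\rho}{2}U' + \tfrac{U}{p-1}\approx 0$ predicts the right rate, but the nonlinear term $|U|^{p-1}U$ and the lower-order Laplacian contribution $U''+\tfrac{d-1}{\rho}U'$ must be shown to be genuinely subleading for all large $\rho$. The cancellation produced by the rescaling to $V$ is precisely what makes this accessible, but extracting \emph{convergence} of $V(\rho)$ — as opposed to mere boundedness — requires either an additional quasi-monotonicity argument for the $V$-equation or a careful shooting analysis in the associated planar phase portrait.
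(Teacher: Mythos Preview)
The paper does not supply its own proof of this proposition: it is stated as a citation to Haraux--Weissler \cite{HarWei82} and used as a black box, so there is no in-paper argument to compare against. Your three-step outline (local existence via the Volterra reformulation with integrating factor $\rho^{d-1}e^{\rho^2/4}$, global boundedness via the decreasing Lyapunov functional, and the rescaling $V=\rho^{2/(p-1)}U$ exploiting the cancellation $\tfrac{1}{p-1}=\tfrac{\alpha}{2}$) is essentially the classical Haraux--Weissler strategy, and your derived equation for $V$ is correct.

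One comment on the sketch itself: you rightly flag that upgrading boundedness of $V$ to \emph{convergence} is the delicate point, but the tools you list (the weighted bound $\int_0^\infty \rho(U')^2\,d\rho<\infty$ and a phase-plane analysis) are not quite enough on their own to force a limit. In the original argument one typically rewrites the $V$-equation in the first-order form $(\nu V')'=\dots$ with the new integrating factor $\nu(\rho)=\rho^{d-1-2\alpha}e^{\rho^2/4}$, integrates, and uses the rapid growth of $\nu$ at infinity together with the $\rho^{-2}$ decay of the remaining coefficients to show that $V'$ is integrable near infinity; this is what actually produces the limit. Your outline gestures at this but stops short of naming that mechanism.
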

    To indicate the dependence on $\alpha$, in what follows we denote the expanders from the theorem above by $U_\alpha$. Furthermore, we denote
	\begin{equation}\label{Def:ell}
		\ell(\alpha):=\lim_{\rho \rightarrow \infty} \rho^{\frac{2}{p-1}}U_\alpha(\rho).
	\end{equation}
    To analyze stability of the constructed expanders, it is customary to rewrite \eqref{Eq:Heat} in variables that are adapted to the self-similar nature of \eqref{ss_profile}, the so-called \emph{(radial) similarity variables}
    	\begin{equation*}
		\tau :=  \ln t, \quad \rho := \frac{r}{\sqrt{t}}.
	\end{equation*}
	By also scaling the dependent variable
	\begin{equation*}
		v(\tau,\rho):=t^\frac{1}{p-1}u(t,r),
	\end{equation*}
	from \eqref{Eq:Heat} we arrive at an evolution equation for  $v$
	\begin{equation}\label{Eq:Sim_var_v}
		\partial_\tau v = L_0 v + |v|^{p-1}v,
	\end{equation}
	where the linear operator $L_0$ is given by
	\begin{equation*}
		L_0  = \partial_\rho^2 +\frac{d-1}{\rho} \partial_\rho + \frac{1}{2}\rho \partial_\rho + \frac{1}{p-1}.
	\end{equation*}
	Note that, by definition, expander profiles $U_\alpha$ are now static solutions of \eqref{Eq:Sim_var_v}. What we did in \cite[Section 2]{glogic2025non} is the (non)linear stability analysis of $U_\alpha$. Namely we studied the flow of \eqref{Eq:Sim_var_v} near $U_\alpha$ in spaces $L^{\eta,\gamma}$. To this end, we set $v(\tau,\rho)= U_\alpha(\rho) + w(\tau,\rho)$ in \eqref{Eq:Sim_var_v}. This then leads to an evolution equation for the perturbation $w$
	\begin{equation}\label{Eq:Sim_var_w}
		\partial_\tau w = L_{\alpha}w + N(w),
	\end{equation}
	where
	\begin{equation*}
		L_{\alpha}=L_0+V_\alpha,\quad  V_\alpha= p  |U_\alpha|^{p-1},
	\end{equation*}
	and
	\begin{equation*}
		\quad N_\alpha(w) := n(U_\alpha+w)-n(U_\alpha)-p|U|^{p-1}w \quad \text{for} \quad  n(f)=|f|^{p-1}f. 
	\end{equation*}
    Stability of the forward self-similar solutions $U_{\alpha}$ in similarity variables corresponds exactly to spectral analysis for the operator $L_{\alpha}.$
The main findings of \cite[Section 2]{glogic2025non} can be summed up by the following theorem. 
\begin{theorem}[\cite{glogic2025non}]\label{thm_spectrum}
Let $d\geq 3,\ p>1+\frac{2}{d},\ 1\leq \eta\leq \gamma$. Then for each $\alpha>0$ the operator $L_\alpha:\mathcal{D}(L_\alpha) \subseteq L^{\eta,\gamma} \rightarrow L^{\eta,\gamma}$ generates a one-parameter strongly continuous semigroup $\left(S_{\alpha}(\tau)\right)_{\tau \geq 0}\subseteq \mathcal{L}(L^{\eta,\gamma})$. Assuming furthermore that
		\begin{equation}
			\quad  1 \leq \eta < \frac{d(p-1)}{2}, 
		\end{equation}
		then for the operator $L_\alpha : \mathcal{D}(L_\alpha) \subseteq L^{\eta,\gamma} \rightarrow L^{\eta,\gamma}$ the following statements hold.
		\begin{itemize}[leftmargin=8mm]
			\setlength{\itemsep}{2mm}
			\item[1.] The set
			\begin{equation}\label{Eq:unstab_spectr}
				\sigma(L_\alpha) \cap \big\{ \la \in \mathbb{C}~ \lvert~ \Re \la > \tfrac{1}{p-1}-\tfrac{d}{2 \eta} \big\}
			\end{equation}
			consists of finitely many real eigenvalues.
			\item [2.] If $p < p_{JL}$ then for every $\varepsilon>0$ there exists $\alpha>0$ such that $L_\alpha$ admits at least one positive eigenvalue, and furthermore all positive eigenvalues are smaller than $\varepsilon$.

			\item[3.] If $p \geq p_{JL}$ then for every $\alpha>0$ the operator $L_\alpha$ admits no positive eigenvalues.
		\end{itemize}
\end{theorem}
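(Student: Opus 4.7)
The free semigroup $S_0$ can be built explicitly by undoing the similarity change of variables: if $u$ solves $\partial_t u = \Delta u$ on $\R^d$ with $u(1,\cdot)=f$, then $v(\tau,\rho) := e^{\tau/(p-1)} u(e^\tau, e^{\tau/2}\rho)$ solves $\partial_\tau v = L_0 v$ with $v(0,\cdot)=f$, yielding $S_0(\tau)f(\rho) = e^{\tau/(p-1)}(e^{(e^\tau-1)\Delta} f)(e^{\tau/2}\rho)$. Combining $L^q$-contractivity of $e^{t\Delta}$ with the scaling identity $\|f(\lambda \cdot)\|_{L^q} = \lambda^{-d/q}\|f\|_{L^q}$ gives the growth bound $\|S_0(\tau)\|_{L^{\eta,\gamma}\to L^{\eta,\gamma}} \leq e^{\tau(\frac{1}{p-1}-\frac{d}{2\eta})}$, and strong continuity follows routinely from continuity of the heat semigroup and the dilation group. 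Since $U_\alpha \in L^\infty$ by \autoref{Prop:Har-Wei}, multiplication by $V_\alpha = p|U_\alpha|^{p-1}$ is bounded on $L^{\eta,\gamma}$, so $L_\alpha = L_0 + V_\alpha$ generates a semigroup $S_\alpha$ by the bounded perturbation theorem.

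\textbf{Structure of the unstable spectrum (item 1).} The growth bound gives $\sigma(L_0) \cap \{\Re\lambda > \frac{1}{p-1}-\frac{d}{2\eta}\} = \varnothing$. The plan is to exploit that, by \autoref{Prop:Har-Wei} and \eqref{Def:ell}, $V_\alpha(\rho) \lesssim (1+\rho)^{-2}$, combined with the localizing drift $\tfrac12\rho\partial_\rho$ in $L_0$, to show that $V_\alpha(\mu - L_0)^{-1}$ is compact on $L^{\eta,\gamma}$ for $\mu$ in the resolvent set. Weyl's theorem on preservation of the essential spectrum under relatively compact perturbations then places the unstable spectrum of $L_\alpha$ among isolated eigenvalues of finite multiplicity; a Birman--Schwinger-type count that exploits the integrability of $V_\alpha$ gives finiteness. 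For reality, I would pass to the Gaussian-weighted Hilbert space $\mathcal{H} := L^2(\R^d, e^{|x|^2/4}dx)$, on which the drift $\tfrac12\rho\partial_\rho$ is antisymmetric and $L_\alpha$ self-adjoint. An unstable eigenfunction $w$ in $L^{\eta,\gamma}$ satisfies an ODE with quadratic drift, hence by standard asymptotic ODE analysis decays like a Gaussian at infinity and belongs to $\mathcal{H}$; the corresponding eigenvalue is then forced to be real.

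\textbf{The Joseph--Lundgren dichotomy (items 2 and 3).} The pivotal object here is the singular static profile $U_s(\rho)=c_{p,d}\rho^{-2/(p-1)}$, which solves \eqref{ODE} distributionally and whose potential $pU_s^{p-1}(\rho) = pc_{p,d}^{p-1}\rho^{-2}$ saturates the Hardy constant $(d-2)^2/4$ precisely at $p = p_{JL}$. For $p \geq p_{JL}$, phase-plane analysis of \eqref{ODE} yields the pointwise comparison $0<U_\alpha<U_s$, hence $V_\alpha \leq pU_s^{p-1}$, and the Hardy inequality on $\R^d$ then forces the quadratic form of $L_\alpha$ on $\mathcal{H}$ to be non-positive (after subtracting the constant $\frac{1}{p-1}$), excluding positive eigenvalues. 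For $p<p_{JL}$ the Hardy constant is strictly exceeded, and the associated Schrödinger-type operator admits a positive eigenvalue on $\mathcal{H}$. The plan is to show $U_\alpha \to U_s$ locally uniformly on $(0,\infty)$ as $\alpha\to\infty$ (again via ODE phase-plane methods) and to transfer an approximate positive eigenfunction of the limiting operator to a genuine positive eigenvalue of $L_\alpha$ by a min-max or Lyapunov--Schmidt argument on $\mathcal{H}$, with the eigenvalue collapsing to $0$ as $\alpha\to\infty$; this simultaneously yields existence and the smallness asserted in item 2.

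\textbf{Main obstacle.} The most delicate step is the quantitative spectral transfer from the singular limiting operator to $L_\alpha$. Since $U_\alpha$ is bounded at the origin while $U_s$ is singular, the convergence $U_\alpha \to U_s$ degenerates at $0$ and the potentials $V_\alpha$ are not close to $pU_s^{p-1}$ in any uniform norm near the origin. Producing a positive eigenvalue of $L_\alpha$ and at the same time bounding it below $\varepsilon$ therefore requires carefully localized trial functions supported in the annulus where $U_\alpha$ already resembles $U_s$, coupled with a finite-dimensional reduction that absorbs the contribution of the bounded inner region. Controlling the rate at which the produced eigenvalue collapses to $0$ as $\alpha\to\infty$ is the technical heart of the argument.
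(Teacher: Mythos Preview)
This theorem is not proved in the present paper; it is quoted from \cite{glogic2025non} (see the sentence immediately preceding the statement), so there is no in-paper argument to compare against. I can therefore only comment on the internal coherence of your sketch.

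Your outline for the semigroup generation, for item~1, and for item~3 is sound and follows the standard route: the explicit formula for $S_0$ obtained by undoing the similarity change of variables, bounded perturbation for $S_\alpha$, relative compactness of $V_\alpha(\mu-L_0)^{-1}$ together with Weyl's theorem, reality of unstable eigenvalues via the self-adjoint realization in the Gaussian-weighted space, and the pointwise comparison $V_\alpha\le pU_s^{p-1}$ combined with Hardy's inequality for $p\ge p_{JL}$ are exactly the right ingredients.

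Your plan for item~2, however, contains a directional error that would make the argument fail. You propose to send $\alpha\to\infty$, use $U_\alpha\to U_s$, and produce a positive eigenvalue of $L_\alpha$ that ``collapses to $0$'' in this limit. But for $p<p_{JL}$ the limiting inverse-square potential $pU_s^{p-1}(\rho)=p\,c_{p,d}^{p-1}\rho^{-2}$ \emph{strictly exceeds} the Hardy constant, so the limiting quadratic form is unbounded from above: as $\alpha$ grows, more and more eigenvalues of $L_\alpha$ cross into the positive half-line and the largest one diverges. This is incompatible with the requirement in item~2 that \emph{all} positive eigenvalues lie below $\varepsilon$. The mechanism that actually delivers item~2 is a bifurcation in $\alpha$, not a large-$\alpha$ limit: differentiating the expander family gives $L_\alpha(\partial_\alpha U_\alpha)=0$, and $\partial_\alpha U_\alpha$ becomes a genuine decaying zero-eigenfunction precisely when the far-field coefficient $\ell$ of \eqref{Def:ell} has a critical point. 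For $p<p_{JL}$ one shows that $\ell$ is non-monotone, and for $\alpha$ just beyond the \emph{first} critical point of $\ell$ the operator $L_\alpha$ has exactly one positive eigenvalue, arbitrarily small. The correct regime is therefore $\alpha$ near a specific bifurcation value, not $\alpha\to\infty$; your ``main obstacle'' paragraph is attacking the wrong end of the parameter range.
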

In particular the statement above implies, in the case $p<p_{JL}$ and $\eta<q_c,$ the existence of a forward self-similar solution that is linearly unstable in similarity variables for the deterministic unforced equation. Moreover, by choosing properly $\alpha$, the most unstable eigenvalue of $L_{\alpha}$ can be fixed to be arbitrarily small.
As a direct consequence of \autoref{thm_spectrum}, in \cite[Section 3]{glogic2025non} we proved the following result, which we shall also employ in the sequel.
	\begin{theorem}\label{thm:existence_ancient_solutions}
		Assume \begin{align*}
		    d\geq 3,\ 1+\frac{2}{d}<p<p_{JL},\ 1\leq \eta<q_c<\gamma, \gamma\geq p\eta.
		\end{align*}
        Let $\bar{\alpha}>0$ be such that for the corresponding expander $\bar{U}$ the operator $L_{\bar{\alpha}}: \mathcal{D}(L_{\bar{\alpha}}) \subseteq L^{\eta,\gamma} \rightarrow  L^{\eta,\gamma}$ admits a maximal positive eigenvalue $\la_{\bar{\alpha}}$, and  $\bar{U}^{lin}$ the eigenfunction of $L_{\bar{\alpha}}$ associated with $\lambda_{\bar{\alpha}}$. Then, for every sufficiently small $\eps>0$ there is $T<0$ such that there exists an ancient solution $\psi \in C_{loc}((-\infty,T],L^{\eta,\gamma})$ to 
        \begin{align*}
		\begin{cases}
		    \partial_\tau \psi=L_{\bar{\alpha}}\psi+\lvert \bar{U}+\psi\rvert^{p-1}(\bar{U}+\psi)-\lvert \bar{U}\rvert^{p-1}\bar{U}-p\lvert\bar{U}\rvert^{p-1}{\psi},\\
            \norm{\psi(\tau)}_{L^{\hat{q},\hat{r}}}\rightarrow 0 \quad \text{ as} \quad \tau\rightarrow -\infty
		\end{cases}
	\end{align*}
    for which 
		\begin{align*}
			\lVert \psi(\tau)\rVert_{L^{\eta,\gamma}}<\eps,
		\end{align*}
		and
		\begin{align*}
			\lVert \psi(\tau)\rVert_{L^{\gamma/p}_{rad}}> e^{\lambda_{\bar{\alpha}}\tau}\frac{\lVert \bar{U}^{lin}\rVert_{L^{\gamma/p}_{rad}}}{2},
		\end{align*}
		for  $\tau\in (-\infty,T]$.
	\end{theorem}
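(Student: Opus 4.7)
The strategy is to construct $\psi$ as an exponentially small perturbation of the purely linear ancient orbit $\tau\mapsto e^{\lambda_{\bar\alpha}\tau}\bar U^{lin}$, via a contraction argument in a weighted space of backward-in-time trajectories. Writing the equation as $\partial_\tau\psi=L_{\bar\alpha}\psi+N_{\bar\alpha}(\psi)$, where
\begin{equation*}
N_{\bar\alpha}(\psi):=\lvert\bar U+\psi\rvert^{p-1}(\bar U+\psi)-\lvert\bar U\rvert^{p-1}\bar U-p\lvert\bar U\rvert^{p-1}\psi
\end{equation*}
collects the superlinear corrections, I would look for a solution to the Duhamel identity
\begin{equation*}
\psi(\tau)=e^{\lambda_{\bar\alpha}\tau}\bar U^{lin}+\int_{-\infty}^{\tau}S_{\bar\alpha}(\tau-s)N_{\bar\alpha}(\psi(s))\,ds.
\end{equation*}
The first summand is a genuine solution of $\partial_\tau\phi=L_{\bar\alpha}\phi$ because $\bar U^{lin}$ is an eigenfunction associated with $\lambda_{\bar\alpha}$; the integral term is anchored at $-\infty$ precisely so as to filter out the complementary spectral directions and preserve exponential decay as $\tau\to -\infty$.

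For the contraction, fix a small $\eps>0$ and $T<0$ to be chosen, and consider the complete metric space
\begin{equation*}
X_{T,\eps}:=\Bigl\{\psi\in C((-\infty,T];L^{\eta,\gamma})\,:\,\sup_{\tau\leq T}e^{-\lambda_{\bar\alpha}\tau}\norm{\psi(\tau)}_{L^{\eta,\gamma}}\leq \eps\Bigr\}
\end{equation*}
with its natural metric, and denote by $\Psi$ the map sending $\psi$ to the right-hand side of the Duhamel identity. Two analytic ingredients are decisive. First, by \autoref{thm_spectrum}, the part of $\sigma(L_{\bar\alpha})$ with real part $>\tfrac{1}{p-1}-\tfrac{d}{2\eta}$ consists of finitely many real eigenvalues, so there exists $\mu<\lambda_{\bar\alpha}$ separating $\lambda_{\bar\alpha}$ from the rest of the spectrum; the Riesz projection $P_u$ onto the (finite-dimensional) generalized eigenspace of $\lambda_{\bar\alpha}$ then furnishes a splitting $L^{\eta,\gamma}=E_u\oplus E_s$ with
\begin{equation*}
\norm{S_{\bar\alpha}(\tau)P_u}_{L^{\eta,\gamma}\to L^{\eta,\gamma}}\lesssim e^{\lambda_{\bar\alpha}\tau},\qquad \norm{S_{\bar\alpha}(\tau)P_s}_{L^{\eta,\gamma}\to L^{\eta,\gamma}}\lesssim e^{\mu\tau},\qquad \tau\geq 0.
\end{equation*}
Second, boundedness of $\bar U$ on $[0,\infty)$ yields the pointwise estimate $\lvert N_{\bar\alpha}(\psi)\rvert\lesssim |\psi|^{1+\delta_1}+|\psi|^p$ for some $\delta_1>0$; combined with H\"older's inequality and the hypothesis $\gamma\geq p\eta$, this places $N_{\bar\alpha}(\psi)$ in a scale from which $S_{\bar\alpha}$ returns to $L^{\eta,\gamma}$ with at worst an integrable short-time loss. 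Combining these ingredients and using $\tau\leq T<0$ one obtains
\begin{equation*}
\norm{\Psi(\psi)(\tau)-e^{\lambda_{\bar\alpha}\tau}\bar U^{lin}}_{L^{\eta,\gamma}}\lesssim e^{(1+\delta)\lambda_{\bar\alpha}\tau}\,\eps^{1+\delta}
\end{equation*}
for some $\delta>0$. Dividing by $e^{\lambda_{\bar\alpha}\tau}$ gives a bound of order $\eps^{1+\delta}e^{\delta\lambda_{\bar\alpha}T}$, which is absorbed into $\eps/2$ once $-T$ is sufficiently large; an analogous difference estimate yields the contraction and hence a unique fixed point $\psi\in X_{T,\eps}$, proving the upper bound on $\norm{\psi(\tau)}_{L^{\eta,\gamma}}$.

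The main obstacle is upgrading the $L^{\eta,\gamma}$-control of $\psi$ to the quantitative $L^{\gamma/p}_{rad}$-lower bound, since the latter norm is not part of the iteration space. For this I would re-run the fixed point in a refined space that additionally tracks the $L^{\gamma/p}_{rad}$-norm of $\psi$, exploiting $\gamma\geq p\eta$ to ensure $\gamma/p\in[\eta,\gamma]$ so that by interpolation between the endpoints of the intersection space the $L^{\gamma/p}_{rad}$-norm is controlled inside the iteration. The resulting decomposition $\psi(\tau)=e^{\lambda_{\bar\alpha}\tau}\bar U^{lin}+R(\tau)$ with $\norm{R(\tau)}_{L^{\gamma/p}_{rad}}\lesssim e^{(1+\delta)\lambda_{\bar\alpha}\tau}$ allows one to dominate $R$ by $\tfrac{1}{2}e^{\lambda_{\bar\alpha}\tau}\norm{\bar U^{lin}}_{L^{\gamma/p}_{rad}}$ on $(-\infty,T]$ after possibly decreasing $T$ further, and the claimed lower bound follows from the reverse triangle inequality.
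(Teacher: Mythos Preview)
The present paper does not prove this theorem here; it is quoted from \cite[Section~3]{glogic2025non}, where the authors implement the Jia--\v{S}ver\'ak program. Your plan---Lyapunov--Perron construction via Duhamel anchored at $-\infty$, contraction in an exponentially weighted backward-time ball using the spectral gap furnished by \autoref{thm_spectrum} and the superlinear bound $|N_{\bar\alpha}(\psi)|\lesssim|\psi|^{p\wedge2}+|\psi|^p$, followed by the $L^{\gamma/p}_{rad}$ lower bound via the reverse triangle inequality---is precisely that approach.

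One cosmetic slip: the radius of your ball $X_{T,\eps}$ cannot be the $\eps$ of the statement, because the weighted norm of the linear seed $e^{\lambda_{\bar\alpha}\tau}\bar U^{lin}$ equals the fixed constant $\|\bar U^{lin}\|_{L^{\eta,\gamma}}$; the ball radius must be of that order, and the smallness $\|\psi(\tau)\|_{L^{\eta,\gamma}}<\eps$ is then obtained a posteriori by taking $T$ sufficiently negative so that $e^{\lambda_{\bar\alpha}T}$ absorbs the fixed constant. Also, your phrase ``anchored at $-\infty$ precisely so as to filter out the complementary spectral directions'' slightly misidentifies the mechanism: the simple forward integral converges here because $\lambda_{\bar\alpha}$ is \emph{maximal}, so the full semigroup obeys $\|S_{\bar\alpha}(\sigma)\|\lesssim e^{\lambda_{\bar\alpha}\sigma}$ and the superlinear decay of $N_{\bar\alpha}(\psi(s))$ does the work; the splitting $E_u\oplus E_s$ you set up is correct but not actually needed.
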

The idea behind the program of \cite{JiaSve15}, which we implemented in \cite{glogic2025non}, is that in case $\overline{\alpha}>0$ is such that $L_{\overline{\alpha}}$ has an unstable eigenvalue, then the nonlinear problem \eqref{Eq:Sim_var_w} admits a nontrivial solution arising from $0$ at $\tau=-\infty.$ Therefore, upon inverting the self-similar change of variables, both $U_{\overline{\alpha}}$ and $U_{\overline{\alpha}}+w$ are distributional solutions of \eqref{Eq:Heat}. They are not mild solutions due to poor integrability properties. However, if the most unstable eigenvalue is sufficiently small, a localization procedure can be performed to produce two mild solutions of \eqref{Eq:Heat} from the same initial data, thereby proving \autoref{Thm:main_deterministic}. Consequently, \autoref{Thm:main_additive_noise}, as we will describe in \autoref{proof_non_unique} below, reduces to showing that an analogous localization argument to that of \cite[Section 4]{glogic2025non} can also be carried out in the presence of a stochastic forcing that is white in time and sufficiently colored in space.
    
\section{non-uniqueness for the stochastically forced equation}\label{proof_non_unique}
	Let $1\leq q<q_c$ as in \autoref{Thm:main_additive_noise}. Let $\epsilon>0$ be such that
	\begin{align*}
		\frac{d(p-1)}{2q}-p<\epsilon<\frac{d(p-1)}{2q}-1.
	\end{align*}
	Then set $r=(p+\epsilon)q$ and $q_a=\frac{r}{p}.$ It follows immediately that \begin{align*}
	    1\leq q<q_a<q_c<r.
	\end{align*} 
    Moreover, due to \autoref{HP_noise_1} and consequently \eqref{eq:regularity_stoch_conv}, the stochastic convolution $z$ constructed in \autoref{Noise_not} satisfies
    \begin{align*}
        z\in C_{loc}([0,+\infty);L^{q,pr})\quad \mathbb{P}-a.s.,\quad z(0)=0.
    \end{align*}
	Let $\bar{U}$ and $\psi$ be from Theorem \ref{thm:existence_ancient_solutions} with $\eta=1$ and $\gamma=pr$. Then, for $t \in (0,e^T]$ we set
	\begin{align*}
		\tilde{u}_{1}(t,x)&=\frac{1}{t^{\frac{1}{p-1}}}\bar{U}\left(\frac{|x|}{\sqrt{t}}\right),\\ \quad \tilde{u}_{2}(t,x)&=\frac{1}{t^{\frac{1}{p-1}}}\bar{U}\left(\frac{|x|}{\sqrt{t}}\right)+\frac{1}{t^{\frac{1}{p-1}}}\psi\left(\ln t,\frac{|x|}{\sqrt{t}}\right).
	\end{align*}
	Recall that $\bar{U} \in C^2_{loc}([0,\infty))$ and 
	\begin{align*}
		\bar{U}(\rho)= O\left(\rho^{-\frac{2}{p-1}}\right) \quad \text{as} \quad  \rho \rightarrow \infty.
	\end{align*}
	\autoref{thm:existence_ancient_solutions} furthermore ensures that $\psi\in C_{loc}((-\infty,T],L^{1,pr})$ and
	\begin{align*}
		\lVert \psi(\tau)\rVert_{L^{1,pr}}<\eps \quad \text{and} \quad \lVert \psi(\tau)\rVert_{L^{r}_{rad}}>e^{\lambda_{\bar{\alpha}}\tau}\frac{\lVert \bar{U}^{lin}\rVert_{L^{r}_{rad}}}{2}
	\end{align*}
	for $\tau\in (-\infty,T]$. Both $\tilde{u}_1$ and $\tilde{u}_2$ are distributional solutions of \eqref{Eq:Heat} starting from the same initial data
    \begin{align*}
        \tilde{u}_0(x)=\frac{\ell(\overline{\alpha})}{\lvert x\rvert^{\frac{2}{p-1}} }.
    \end{align*}
    However, $\tilde{u}_0$ (and consequently also $\tilde{u}_1,\ \tilde{u}_2$) fails to be in $L^q(\R^d)$ due to poor integrability properties at $\lvert x\rvert\rightarrow +\infty.$ The idea for constructing two solutions of \eqref{stochastic_heat} is to define \begin{align*}
        u_0(x)=\tilde{u}_0(x)\one_{[0,\overline{R}]}(\lvert x\rvert),\ \overline{R}>0 
    \end{align*}
    and look for \begin{align*}
        u_1=z+\tilde{u}_1-w_1,\quad u_2=z+\tilde{u}_2-w_2
    \end{align*}
    for some $w_1,\ w_2$ which solve a certain nonlinear random PDE \eqref{nonlinear_pde} with initial condition $\tilde{u}_0-u_0.$ Then, a posteriori, we recover that $u_1$ and $u_2$ constructed in this way are two different mild solutions of \eqref{stochastic_heat}. We split the program shortly described above into \autoref{subsec_loc} and \autoref{sec:localization}.

\subsection{Localization Procedure}\label{subsec_loc}
Since both $\overline{u}(t,x):=\frac{1}{t^{\frac{1}{p-1}}}\overline{U}(\frac{x}{\sqrt{t}})$ and $\overline{u}(t,x)+u'(t,x),$  $u'(t,x)=\frac{1}{t^{\frac{1}{p-1}}}\psi(\log t,\frac{x}{\sqrt{t}})$ solve the deterministic nonlinear heat equation \eqref{Eq:Heat} and $z$ is a solution of the linear stochastic heat equation \eqref{Eq:Stoch_lin_heat}, we obtain that $w_{i},\ i\in\{1,2\},$ solve
\begin{align*}
    d w_{i}&=-d u_i+d \overline{u}+d u_i'+dz \\ & =\Delta w_i dt -\left[\lvert \overline{u}+u_i'-w_i+z\rvert^{p-1}(\overline{u}+u_i'-w_i+z)-\lvert \overline{u}+u_i'\rvert^{p-1}(\overline{u}+u_i')\right] dt \\ & + p\lvert \overline{u}\rvert^{p-1}(w_i-z)dt- p\lvert \overline{u}\rvert^{p-1}(w_i-z)dt, 
\end{align*}
where ${u}'_1=0,\ u'_2=u'$. 
In conclusion we are interested in studying the Cauchy problem
\begin{align}\label{nonlinear_pde}
    \begin{cases}
        \partial_t w&=\Delta w+p\lvert \overline{u}\rvert^{p-1}w+\tilde{f}(w),\\
        w(0)&=w_0
    \end{cases}
\end{align}
where
\begin{align}\label{forcing_nonlinear_aux}
    \tilde{f}(w)&=-\left[\lvert \overline{u}+u'-w+z\rvert^{p-1}(\overline{u}+u'-w+z)-\lvert \overline{u}+u'\rvert^{p-1}(\overline{u}+u')+ p\lvert \overline{u}\rvert^{p-1}(w-z)\right] \notag \\ &-p\lvert \overline{u}\rvert^{p-1} z
\end{align}
and $w_0\in L_{rad}^r(\R^d)$. Equation \eqref{nonlinear_pde} resembles the one treated in \cite[Section 4]{glogic2025non}. However, the nonlinearity is different and requires ad hoc treatment. Before stating the main result of this section we need to settle some notation. For each $T>0$, we define the Banach space
	\begin{align*}
		Z^{T}:=\{w\in L^{\infty}((0,T),&L^r_{rad}(\R^d)) ~\lvert \\
		& \sup_{t\in (0,T)}t^{\frac{d}{2r}\left(\frac{p-1}{p}\right)}\lVert w(t)\rVert_{ L^{pr}_{rad}}<\infty,~ \lim_{t\rightarrow 0}t^{\frac{d}{2r}\left(\frac{p-1}{p}\right)}\lVert w(t)\rVert_{ L^{pr}_{rad}}=0\},
	\end{align*}
	equipped with its natural norm
	\begin{align*}
		\| w \|_{Z^{T}}:=\sup_{t\in (0,T)}\lVert w(t)\rVert_{L^r_{rad}}+ \sup_{t\in (0,T)}t^{\frac{d}{2r}\left(\frac{p-1}{p}\right)}\lVert w(t)\rVert_{ L^{pr}_{rad}}.
	\end{align*}
    With this notation in mind, we are ready to prove the following well-posedness result for \eqref{non_linear_singular_pde_noise}.
\begin{proposition}\label{non_linear_singular_pde_noise}
		Let $\bar{\alpha}>0$ be such that for the corresponding expander $\bar{U}$ the operator $L_{\bar{\alpha}}: \mathcal{D}(L_{\bar{\alpha}}) \subseteq L^{1,pr} \rightarrow  L^{1,pr}$ admits a maximal positive eigenvalue $\la_{\bar{\alpha}}$ that satisfies 
        \begin{align}\label{item_instability}
		 {\lambda}_{\bar{\alpha}}<\frac{1}{p-1}-\frac{d}{2r}. 
		\end{align}
        Assume further that $w_0\in L^r_\text{rad}(\R^d)$ and $u':(0,1)\times \R^d \rightarrow \R$ is such that it can be written as $u'(t,x)=\frac{1}{t^{\frac{1}{p-1}}}U'\left(\ln t,\frac{|x|}{\sqrt{t}} \right)$, where
		\begin{align*}
			\sup_{\tau \in (-\infty,T]} \lVert U'(\tau,\cdot)\rVert_{L^{1,pr}}<\infty
		\end{align*}
		for some $T<0$. Then, whenever the above displayed quantity is sufficiently small, there is a stopping time $T'>0$ such that for $\mathbb{P}$-a.e. $\omega\in \Omega$ there exists a unique $w$ in $Z^{T'}$ solution to the Cauchy problem \eqref{non_linear_singular_pde_noise}-\eqref{forcing_nonlinear_aux}. Moreover, $w \in C([0,T'],L^r_{rad}(\R^d))\ \mathbb{P}$-a.s. and \begin{align}\label{energy_estimate_nonlin_1}
			\lVert w\rVert_{ L^{\infty}((0,T'),L^r_{rad})}+\operatorname{sup}_{t\in (0,T')}t^{\frac{d}{2r}\left(\frac{p-1}{p}\right)}\lVert w(t)\rVert_{ L^{pr}_{rad}}&\lesssim \lVert w_0\rVert_{L^r_{rad}}\quad \mathbb{P}-a.s.,
		\end{align}
		\begin{align}\label{energy_estimate_nonlin_2}
			\operatorname{lim}_{t\rightarrow 0}t^{\frac{d}{2r}\left(\frac{p-1}{p}\right)}\lVert w(t)\rVert_{ L^{pr}_{rad}}&=0\quad \mathbb{P}-a.s.    
		\end{align}
    
\end{proposition}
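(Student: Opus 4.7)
The plan is to solve \eqref{nonlinear_pde} in mild form,
\begin{align*}
    w(t)=S(t)w_0+\int_0^t S(t-s)\tilde f(w)(s)\,ds,
\end{align*}
where $S(t)$ is the linear evolution on $L^r_{rad}(\R^d)$ generated by $\Delta+p|\overline{u}(t,\cdot)|^{p-1}$, by a Banach contraction argument in $Z^{T'}$ for a stopping time $T'>0$ to be constructed. Passing to self-similar variables $\tau=\ln t$, $\rho=|x|/\sqrt t$ converts $S(t)$ into the semigroup $S_{\bar\alpha}(\tau)$ from \autoref{thm_spectrum}, and the hypothesis \eqref{item_instability} places the unstable eigenvalue of $L_{\bar\alpha}$ on $L^{1,pr}$ strictly below the $L^r_{rad}$ scaling-critical rate. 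This yields, exactly as in the deterministic analysis of \cite[Section~4]{glogic2025non}, the smoothing bounds
\begin{align*}
    \|S(t-s)f\|_{L^{pr}_{rad}}\lesssim (t-s)^{-\frac{d}{2r}\left(\frac{p-1}{p}\right)}\|f\|_{L^{r}_{rad}},\qquad \|S(t-s)f\|_{L^{r}_{rad}}\lesssim \|f\|_{L^{r}_{rad}},
\end{align*}
on $0<s<t\leq T_0$ for some small $T_0>0$.

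Next, after cancelling $\pm p|\overline{u}|^{p-1}z$, the forcing simplifies to
\begin{align*}
    \tilde f(w)=F(\overline{u}+u')-F(\overline{u}+u'-w+z)-p|\overline{u}|^{p-1}w,\qquad F(y)=|y|^{p-1}y,
\end{align*}
and the pointwise bound $|F(a)-F(b)|\lesssim |a-b|(|a|^{p-1}+|b|^{p-1})$ combined with Hölder's inequality on the mixed $L^r_{rad}\cap L^{pr}_{rad}$ structure reduces the estimate of $\tilde f(w)$ in the relevant norms to products of $\|w\|_{L^{pr}}$, $\|z\|_{L^{pr}}$, $\|u'\|_{L^{pr}}$, plus the singular piece $\||\overline{u}|^{p-1}(u'-w+z)\|$ arising from the linearization at $\overline{u}$. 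The stopping time is defined by
\begin{align*}
    T':=\inf\Big\{t>0:\sup_{s\leq t}\|z(s)\|_{L^{1,pr}}+\sup_{s\leq t}s^{\frac{d}{2r}\left(\frac{p-1}{p}\right)}\|z(s)\|_{L^{pr}_{rad}}\geq\delta\Big\}\wedge T_0,
\end{align*}
with $\delta>0$ small enough depending on $\|w_0\|_{L^r_{rad}}$ and on the smallness of $\sup_\tau\|U'(\tau,\cdot)\|_{L^{1,pr}}$. Since $z(0)=0$ and $z\in C_{loc}([0,+\infty);L^{1,pr})$ $\mathbb{P}$-a.s.\ by \eqref{eq:regularity_stoch_conv}, $T'>0$ $\mathbb{P}$-a.s. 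On $[0,T']$ the $z$-dependent contributions in $\tilde f$ are $O(\delta)$, the genuinely nonlinear $w$-terms are absorbed into the smallness of $\|w\|_{Z^{T'}}$ (close to $\|w_0\|_{L^r_{rad}}$ by the contraction), and the linear-in-$w$ remainder weighted by $|\overline{u}+u'|^{p-1}-|\overline{u}|^{p-1}$ is made small by the smallness of $\sup_\tau\|U'(\tau,\cdot)\|_{L^{1,pr}}$.

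The Banach fixed point in $Z^{T'}$ then produces the unique solution $w$ and the bound \eqref{energy_estimate_nonlin_1}; continuity $w\in C([0,T'],L^r_{rad})$ follows from strong continuity of $S(t)$ and dominated convergence in the Duhamel integral, while \eqref{energy_estimate_nonlin_2} comes from $\lim_{t\to 0}t^{\frac{d}{2r}\left(\frac{p-1}{p}\right)}\|S(t)w_0\|_{L^{pr}}=0$ (by density of smooth functions in $L^r_{rad}$) together with the vanishing of the Duhamel contribution as $t\to 0$. The hardest step I expect is the control of the linear-in-$z$ piece of $\tilde f$ with singular coefficient $|\overline{u}|^{p-1}$: since $|\overline{u}(s,\cdot)|^{p-1}$ scales like $s^{-1}$ in self-similar variables, a naive Hölder estimate produces a factor $s^{-1+d(p-1)/(2r)}$ that, when combined with the Duhamel kernel $(t-s)^{-d(p-1)/(2r)}$, sits at the edge of integrability; closing the argument requires exploiting the enhanced integrability $z\in C_{loc}([0,+\infty);L^{q,pr})$ provided by \eqref{eq:regularity_stoch_conv}, together with the self-similar $L^{r/(p-1)}$-bound for $|\overline{u}(s)|^{p-1}$ (valid thanks to $r>q_c$ and the decay $\overline{U}(\rho)\sim \rho^{-2/(p-1)}$), so that the resulting time exponent stays jointly integrable. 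This is precisely the balance enforced by \eqref{item_instability}.
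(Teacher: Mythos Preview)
Your plan follows the same architecture as the paper's proof: a Banach fixed point in $Z^{T'}$ for the Duhamel formulation built on the non-autonomous linear evolution $\mathcal S$ generated by $\Delta+p|\overline u(t,\cdot)|^{p-1}$ (this is the solution map $\mathcal S[g,f]$ of \cite[Lemma~4.1]{glogic2025non}), with the stopping time $T'$ chosen as a hitting time for $\lVert z\rVert_{C([0,t];L^{r,pr})}$ so that $T'>0$ a.s.\ by $z(0)=0$. The paper likewise isolates $\tilde f(w)=f(w)-p|\overline u|^{p-1}z$ with $f(w)=F(\overline u+u'-w+z)-F(\overline u+u')-p|\overline u|^{p-1}(w-z)$ and treats the two pieces separately via \cite[Lemma~4.2]{glogic2025non}.

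Two points in your plan need to be tightened before it closes. First, the smoothing estimates you list, $L^r_{rad}\to L^r_{rad}$ and $L^r_{rad}\to L^{pr}_{rad}$, are not by themselves enough: the paper crucially uses the $L^{q_a}_{rad}\to L^r_{rad}$ smoothing with $q_a=r/p$, and the input to \cite[Lemma~4.2]{glogic2025non} is control of \emph{both} $t\lVert\tilde f(w)(t)\rVert_{L^r_{rad}}$ and $t^{1+\frac d{2r}-\frac d{2q_a}}\lVert\tilde f(w)(t)\rVert_{L^{q_a}_{rad}}$, together with vanishing at $t\to 0$. You effectively use this later (your kernel $(t-s)^{-d(p-1)/(2r)}$ is precisely the $L^{q_a}\to L^r$ rate), but it should be stated up front. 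Second, and more seriously, your ``singular piece $\lVert|\overline u|^{p-1}(u'-w+z)\rVert$'' must not contain the $|\overline u|^{p-1}w$ contribution: that linear term is exactly what has been absorbed into $S(t)$, and if it reappeared in $\tilde f$ the resulting Duhamel contribution would be of size $O(\lVert w\rVert_{Z^{T'}})$ with no smallness in $T'$ (the integral $\int_0^t(t-s)^{-a}s^{a-1}\,ds$, $a=\frac{d(p-1)}{2r}$, is the Beta function $B(a,1-a)$), so the map would fail to contract. The residual singular piece is $-p|\overline u|^{p-1}z$ alone, handled through $z(0)=0$; your crude bound $|F(a)-F(b)|\lesssim|a-b|(|a|^{p-1}+|b|^{p-1})$ does not exhibit the cancellation of the leading $p|\overline u|^{p-1}(w-z)$ inside $f(w)$, and the paper instead invokes the second-order remainder estimate \cite[Lemma~3.2]{glogic2025non}, splitting into the cases $p\in(1+\tfrac2d,2]$ and $p>2$, to obtain bounds on $f(w)$ that are free of $|\overline u|^{p-1}$ factors.
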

\begin{proof}
    As in \cite[Theorem 4.2]{glogic2025non}, we look for a solution of the form 
    \begin{align*}
        w(t)=\mathcal{S}[w_0,0](t)+\mathcal{S}[0,\tilde{f}(w)](t),
    \end{align*}
    where $\mathcal{S}(g,f)$ denotes the solution map for the linear problem 
\begin{align*}
    \begin{cases}
        \partial_t w&=\Delta w+p\lvert \overline{u}\rvert^{p-1}w+f,\\
        w(0)&=g
    \end{cases}
\end{align*}
    given by \cite[Lemma 4.1]{glogic2025non}.  Let us denote by $M=\norm{\mathcal{S}[w_0,0]}_{Z^{e^T}}$ and by $B_{2M}\subseteq Z^{T'}$ the closed ball in $Z^{T'}$ for $T'>0$ with center $0$ and radius $2M$. We are looking for $\eps>0$ and $T'>0$ small enough such that 
		\begin{align*}
			\Gamma(w)=\mathcal{S}[w_0,0]+\mathcal{S}[0,\tilde{f}(w)]
		\end{align*}
		is a contraction on $B_{2M}$. First we need to show that $\Gamma$ maps $B_{2M}$ into itself.
        We start by observing that 
        \begin{align*}
            \tilde{f}(w)&=f(w)-p\lvert \overline{u}\rvert^{p-1}z,\\ f(w)&=\lvert \overline{u}+u'-w+z\rvert^{p-1}(\overline{u}+u'-w+z)-\lvert \overline{u}+u'\rvert^{p-1}(\overline{u}+u')- p\lvert \overline{u}\rvert^{p-1}(w-z).
        \end{align*}
        Moreover, by H\"older’s inequality and the definition of $\overline{u}$, we have
\begin{align}\label{estimate_1_nonlinear_aux}
      \norm{\lvert \overline{u}(t)\rvert^{p-1}z(t)}_{L^{q_a}_{rad}}&\leq t^{\frac{d}{2q_a}-\frac{d}{2r}-1} \norm{ \overline{U}}_{L^r_{rad}}^{p-1}\norm{z(t)}_{L^r_{rad}},\\
      \label{estimate_2_nonlinear_aux}
      \norm{\lvert \overline{u}(t)\rvert^{p-1}z(t)}_{L^{r}_{rad}}&\leq t^{-1} \norm{ \overline{U}}_{L^{\infty}_{rad}}^{p-1}\norm{z(t)}_{L^r_{rad}}.
    \end{align}
    Since $z(t)\in C([0,e^T];L^r_{rad})$ and $z(0)=0$ we have also
    \begin{align*}
        \lim_{t \rightarrow 0} \left(\norm{\lvert \overline{u}(t)\rvert^{p-1}z(t)}_{L^{r}_{rad}}t+\norm{\lvert \overline{u}(t)\rvert^{p-1}z(t)}_{L^{q_a}_{rad}} t^{1+\frac{d}{2r}-\frac{d}{2q_a}}\right)=0.
    \end{align*}
    Now we estimate the other terms in $\tilde{f}(w)$ distinguishing between the case $p\in (1+\frac{2}{d},2]$ and $p>2$. In the first case, due to \cite[Lemma 3.2]{glogic2025non} with $x=\overline{u},\ y=u',\ z=u'-w+z$
    we have
    \begin{align*}
\lVert f(w)\rVert_{L^{q_a}_{rad}}&\lesssim_p \lVert  w\rVert_{L^r_{rad}}^p+\lVert  z\rVert_{L^r_{rad}}^p+\lVert \lvert u'\rvert^{p-1}\lvert w\rvert\rVert_{L^{q_a}_{rad}}+\lVert \lvert u'\rvert^{p-1}\lvert z\rvert\rVert_{L^{q_a}_{rad}},\\  \lVert f(w)\rVert_{L^r_{rad}}&\lesssim_p \lVert w\rVert_{L^{rp}_{rad}}^p+\lVert z\rVert_{L^{rp}_{rad}}^p+\lVert \lvert u'\rvert^{p-1}\lvert w\rvert\rVert_{L^r_{rad}}+\lVert \lvert u'\rvert^{p-1}\lvert z\rvert\rVert_{L^r_{rad}}.
    \end{align*}
		Therefore, by exploiting relations \cite[equations (4.25)-(4.26)-(4.35)]{glogic2025non}, we obtain, thanks to the fact that $w\in B_{2M}$ and  $z(t)\in C([0,e^T];L^{r,pr})$ with $z(0)=0$,
		\begin{align*}
			&t^{1+\frac{d}{2r}-\frac{d}{2{q_a}}}\lVert f(w(t))\rVert_{L^{q_a}_{rad}}\lesssim \eps^{p-1} (M+\norm{z}_{C([0,T'];L^r_{rad})})+\left(T'\right)^{1-\frac{d(p-1)}{2r}}(M^{p}+\norm{z}_{C([0,T'];L^r_{rad})}^p),\\
			&t\lVert f(w(t))\rVert_{L^r_{rad}} \lesssim \eps^{p-1}\left(M+\norm{z}_{C([0,T'];L^{pr})}\right)+T'\norm{z}_{C([0,T'];L^{pr})}^p+\left(T'\right)^{1-\frac{d(p-1)}{2r}}M^p,\\
			&\operatorname{lim}_{t\rightarrow 0} t^{1+\frac{d}{2r}-\frac{d}{2{q_a}}}\lVert f(w(t))\rVert_{L^{q_a}_{rad}}=0,\quad \operatorname{lim}_{t\rightarrow 0} t\lVert f(w(t))\rVert_{L^r_{rad}}=0.
		\end{align*}  
        Therefore $\tilde{f}(w)$ satisfies the assumptions of \cite[Lemma 4.2]{glogic2025non}
        and 
        \begin{align*}
			\lVert  \Gamma(w)\rVert_{Z^{T'}}&\leq M+C\left\{\norm{z}_{C([0,T'];L^{r,pr}_{rad})}+\eps^{p-1} M +\left(T'\right)^{1-\frac{d(p-1)}{2r}}(M^{p}+\norm{z}_{C([0,T'];L^{r,pr}_{rad})}^p)\right\}\\ & \leq 2M,
		\end{align*}
		whenever $\eps$ and $T'$ satisfy 
        \begin{align*}
            & C\epsilon^{p-1}\leq 1/2,\\
            & T'=\inf\left\{t>0:C\left(\norm{z}_{C([0,t];L^{r,pr}_{rad})}+t^{1-\frac{d(p-1)}{2r}}(M^{p}+\norm{z}_{C([0,t];L^{r,pr}_{rad})}^p)\right)\geq M/2\right\}\wedge e^T.
        \end{align*}
        We are using again the continuity of $z$ and the fact that $z(0)=0$ to define $T'$.
In case of $p>2$, due to \cite[Lemma 3.2]{glogic2025non} with $x=\overline{u},\ y=u',\ z=u'-w+z$ and Young's inequality we have
    \begin{align*}
\lVert f(w)\rVert_{L^{q_a}_{rad}}\lesssim_p \, & \lVert w\rVert_{L^r_{rad}}^p+\lVert z\rVert_{L^r_{rad}}^p+\lVert \lvert u'\rvert^{p-1}\lvert w\rvert\rVert_{L^{q_a}_{rad}}+\lVert \lvert u'\rvert^{p-1}\lvert z\rvert\rVert_{L^{q_a}_{rad}} \\ & +\lVert\lvert \bar{u}\rvert^{p-2}\lvert u'\rvert\lvert w\rvert \rVert_{L^{q_a}_{rad}}+\lVert\lvert \bar{u}\rvert^{p-2}\lvert u'\rvert\lvert z\rvert \rVert_{L^{q_a}_{rad}}+\lVert\lvert \bar{u}\rvert^{p-2}\lvert w\rvert^2 \rVert_{L^{q_a}_{rad}}+\lVert\lvert \bar{u}\rvert^{p-2}\lvert z\rvert^2 \rVert_{L^{q_a}_{rad}},\\  \lVert f(w)\rVert_{L^r_{rad}}\lesssim_p \, & \lVert w\rVert_{L^{pr}_{rad}}^p+\lVert z\rVert_{L^{pr}_{rad}}^p+\lVert \lvert u'\rvert^{p-1}\lvert w\rvert\rVert_{L^{r}_{rad}}+\lVert \lvert u'\rvert^{p-1}\lvert z\rvert\rVert_{L^{r}_{rad}} \\ & +\lVert\lvert \bar{u}\rvert^{p-2}\lvert u'\rvert\lvert w\rvert \rVert_{L^{r}_{rad}}+\lVert\lvert \bar{u}\rvert^{p-2}\lvert u'\rvert\lvert z\rvert \rVert_{L^{r}_{rad}}+\lVert\lvert \bar{u}\rvert^{p-2}\lvert w\rvert^2 \rVert_{L^{r}_{rad}}+\lVert\lvert \bar{u}\rvert^{p-2}\lvert z\rvert^2 \rVert_{L^{r}_{rad}}.
    \end{align*}
		Therefore, by exploiting relations \cite[equations (4.25)-(4.26)-(4.31)-(4.32)-(4.33)-(4.34)-(4.35)-(4-36)]{glogic2025non}, we obtain thanks to the fact that $w\in B_{2M}$ and  $z(t)\in C([0,e^T];L^{r,pr})$ with $z(0)=0$
		\begin{align*}
		t^{1+\frac{d}{2r}-\frac{d}{2{q_a}}}\lVert f(w(t))\rVert_{L^{q_a}_{rad}}&\lesssim \eps \left(M+\norm{z}_{C([0,T'];L^r_{rad})}\right)\\ &+\left(T'\right)^{\frac{1}{p-1}-\frac{d}{2r}}(M^2+M^{p}+\norm{z}_{C([0,T'];L^r_{rad})}^2+\norm{z}_{C([0,T'];L^r_{rad})}^p),\\
			t\lVert f(w(t))\rVert_{L^r_{rad}} \lesssim & \eps \left(M+ \norm{z}_{C([0,T'];L^{pr})}\right)\\ &+\left(T'\right)^{\frac{1}{p-1}-\frac{d}{2r}}(M^2+M^{p}+\norm{z}_{C([0,T'];L^{pr}_{rad})}^2+\norm{z}_{C([0,T'];L^{pr}_{rad})}^p),\\
			\operatorname{lim}_{t\rightarrow 0} t^{1+\frac{d}{2r}-\frac{d}{2{q_a}}}\lVert f(w(t))\rVert_{L^{q_a}_{rad}}&=0,\quad \operatorname{lim}_{t\rightarrow 0} t\lVert f(w(t))\rVert_{L^r_{rad}}=0.
		\end{align*}  
        Therefore, in this case as well, $\tilde{f}(w)$ satisfies the assumptions of \cite[Lemma 4.2]{glogic2025non}
        and 
        \begin{align*}
			\lVert  \Gamma(w)\rVert_{Z^{T'}}&\leq M\\ & +C\left\{\norm{z}_{C([0,T'];L^{r,pr}_{rad})}+\eps M +\left(T'\right)^{\frac{1}{p-1}-\frac{d}{2r}}(M^2+M^{p}+\norm{z}_{C([0,T'];L^{r,pr}_{rad})}^2+\norm{z}_{C([0,T'];L^{r,pr}_{rad})}^p)\right\}\\ & \leq 2M,
		\end{align*}
		whenever $\eps$ and $T'$ are chosen such that
        \begin{align*}
            & C\epsilon \leq 1/2,\\
            & T'=\inf\left\{t>0:C\left\{\norm{z}_{C([0,t];L^{r,pr}_{rad})} +t^{\frac{1}{p-1}-\frac{d}{2r}}(M^2+M^{p}+\norm{z}_{C([0,t];L^{r,pr}_{rad})}^2+\norm{z}_{C([0,t];L^{r,pr}_{rad})}^p)\right\} \leq M/2\right\}\\ &\quad \quad\quad \quad \wedge e^T.
        \end{align*}
        		Now we show that $\Gamma$ is a contraction in $B_{2M}$, possibly after reducing $\eps$ and $T'$. For $w_1,\ w_2\in B_{2M}$, we observe that
		\begin{align*}
			\Gamma(w_1)-\Gamma(w_2)=\mathcal{S}[0,f(w_1)-f(w_2)].
		\end{align*} We first consider the case $p\in (1+\frac{2}{d},2]$. Thanks to \cite[Lemma 3.2]{glogic2025non} with $x=\bar{u},\ y=u'-w_1+z,\ z=u'-w_2+z$, we get
		\begin{align*}
			\lVert f(w_1)-f(w_2)\rVert_{L^{q_a}_{rad}}\leq \, &  \lVert \lvert u' \rvert^{p-1}\lvert w_1-w_2\rvert \rVert_{L^{q_a}_{rad}}\\ &+ \left(\lVert w_1 \rVert^{p-1}_{L^r_{rad}}+\lVert w_2 \rVert^{p-1}_{L^r_{rad}}+2\lVert z \rVert^{p-1}_{L^r_{rad}}\right)\lVert w_1-w_2\rVert_{L^r_{rad}},\\
			\lVert f(w_1)-f(w_2)\rVert_{L^r_{rad}}\leq \, &  \lVert \lvert u' \rvert^{p-1}\lvert w_1-w_2\rvert \rVert_{L^r_{rad}}\\ &+ \left(\lVert w_1 \rVert^{p-1}_{L^{pr}_{rad}}+\lVert w_2 \rVert^{p-1}_{L^{pr}_{rad}}+2\lVert z \rVert^{p-1}_{L^{pr}_{rad}}\right)\lVert w_1-w_2\rVert_{L^{pr}_{rad}}.
		\end{align*}
		Therefore, \cite[equations (4.25)-(4.26)-(4.35)]{glogic2025non} imply, thanks to the fact that $w_1,w_2\in B_{2M}$ and $z\in C([0,T];L^{r,pr})$,
		\begin{align*}
			t^{1+\frac{d}{2r}-\frac{d}{2{q_a}}}\lVert f(w_1(t))-f(w_2(t))\rVert_{L^{q_a}_{rad}}&\lesssim\eps^{p-1} M+\left(T'\right)^{1-\frac{d(p-1)}{2r}}\left(M^{p}+M \norm{z}_{C([0,T'];L^r)}^{p-1}\right),\\
			t\lVert f(w_1(t))-f(w_2(t))\rVert_{L^r_{rad}}& \lesssim\eps^{p-1}M+\left(T'\right)^{1-\frac{d(p-1)}{2r}}\left(M^{p}+M \norm{z}_{C([0,T'];L^{pr})}^{p-1}\right),\\
			\operatorname{lim}_{t\rightarrow 0} t^{1+\frac{d}{2r}-\frac{d}{2{q_a}}}\lVert f(w_1(t))-f(w_2(t))\rVert_{L^{q_a}_{rad}}&=0,\\ \operatorname{lim}_{t\rightarrow 0} t\lVert f(w_1(t))-f(w_2(t))\rVert_{L^r_{rad}}&=0.
		\end{align*}
        Since $f(w_1)-f(w_2)$ satisfies the assumptions of \cite[Lemma 4.2]{glogic2025non}
        we obtain
		\begin{align*}
			\lVert  \Gamma(w_1)-\Gamma(w_2)\rVert_{Z^{T'}}&\lesssim\left(\eps^{p-1}+\left(T'\right)^{1-\frac{d(p-1)}{2r}}\left(M^{p-1}+\norm{z}_{C([0,T'];L^{r,pr})}^{p-1}\right)\right)\lVert w_1-w_2\rVert_{Z^{T'}}\\ & \leq \frac{1}{2}\lVert w_1-w_2\rVert_{Z^{T'}}
		\end{align*}
		for $\eps$ and $T'$ small enough. Namely, denoting by $C'$ the hidden constant in the above inequality, we require, in this case, that
        \begin{align*}
            C' \eps^{p-1}&\leq 1/4
            \\
            T'&=\inf\left\{t>0:C\left(\norm{z}_{C([0,t'];L^{r,pr}_{rad})}+t^{1-\frac{d(p-1)}{2r}}(M^{p}+\norm{z}_{C([0,t];L^{r,pr}_{rad})}^p)\right)\geq M/2\right\}\wedge e^T\\ & \wedge 
\inf\left\{t>0:C' t^{1-\frac{d(p-1)}{2r}}\left(M^{p-1}+\norm{z}_{C([0,t];L^{pr})}^{p-1}\right) > 1/4\right\}.
        \end{align*}
In the case $p>2$, by \cite[Lemma 3.2]{glogic2025non} with $x=\bar{u},\ y=u'-w_1+z,\ z=u'-w_2+z$, and using Young's inequality, we obtain
		\begin{align*}
			\lVert f(w_1)-f(w_2)\rVert_{L^{q_a}_{rad}}\lesssim_p\,&   \lVert \lvert u' \rvert^{p-1}\lvert w_1-w_2\rvert \rVert_{L^{q_a}_{rad}}\\ &+ \left(\lVert z \rVert^{p-1}_{L^r_{rad}}+\lVert w_1 \rVert^{p-1}_{L^r_{rad}}+\lVert w_2 \rVert^{p-1}_{L^r_{rad}}\right)\lVert w_1-w_2\rVert_{L^r_{rad}}\\ & +\lVert \lvert \bar{u}\rvert^{p-2}\lvert u'\rvert \lvert w_1-w_2\rvert \rVert_{L^{q_a}_{rad}}\\ & +\lVert \lvert \bar{u}\rvert^{p-2}\lvert z\rvert \lvert w_1-w_2\rvert \rVert_{L^{q_a}_{rad}}\\ & +\lVert \lvert \bar{u}\rvert^{p-2}\left(\lvert w_1\rvert+\lvert w_2\rvert\right) \lvert w_1-w_2\rvert \rVert_{L^{q_a}_{rad}},\\
			\lVert f(w_1)-f(w_2)\rVert_{L^r_{rad}} \lesssim_p\,&   \lVert \lvert u' \rvert^{p-1}\lvert w_1-w_2\rvert \rVert_{L^{r}_{rad}}\\ &+ \left(\lVert z \rVert^{p-1}_{L^{pr}_{rad}}+\lVert w_1 \rVert^{p-1}_{L^{pr}_{rad}}+\lVert w_2 \rVert^{p-1}_{L^{pr}_{rad}}\right)\lVert w_1-w_2\rVert_{L^{pr}_{rad}}\\ & +\lVert \lvert \bar{u}\rvert^{p-2}\lvert u'\rvert \lvert w_1-w_2\rvert \rVert_{L^{r}_{rad}}\\ & +\lVert \lvert \bar{u}\rvert^{p-2}\lvert z\rvert \lvert w_1-w_2\rvert \rVert_{L^{r}_{rad}}\\ & +\lVert \lvert \bar{u}\rvert^{p-2}\left(\lvert w_1\rvert+\lvert w_2\rvert\right) \lvert w_1-w_2\rvert \rVert_{L^{r}_{rad}}.
		\end{align*}
				Therefore, by exploiting relations \cite[equations (4.25)-(4.26)-(4.31)-(4.32)-(4.33)-(4.34)-(4.35)-(4-36)]{glogic2025non}, we obtain thanks to the fact that $w\in B_{2M}$ and  $z(t)\in C([0,e^T];L^{r,pr})$ with $z(0)=0$ 
		\begin{align*}
			t^{1+\frac{d}{2r}-\frac{d}{2{q_a}}}&\lVert f(w_1(t))-f(w_2(t))\rVert_{L^{q_a}_{rad}}\\ &\lesssim \eps M+(T')^{\frac{1}{p-1}-\frac{d}{2r}}M(M+M^{p-1}+\norm{z}_{C([0,T'];L^r)}+\norm{z}_{C([0,T'];L^r)}^{p-1}),\\
			t&\lVert f(w_1(t))-f(w_2(t))\rVert_{L^r_{rad}}\\ &\lesssim \eps M+\left(T'\right)^{\frac{1}{p-1}-\frac{d}{2r}}M\left(M+M^{p-1}+ \norm{z}_{C([0,T'];L^{pr})}+\norm{z}_{C([0,T'];L^{pr})}^{p-1}\right),\end{align*}
		\begin{align*}
			&\operatorname{lim}_{t\rightarrow 0} t^{1+\frac{d}{2r}-\frac{d}{2{q_a}}}\lVert f(w_1(t))-f(w_2(t))\rVert_{L^{q_a}}=0,\\  &\operatorname{lim}_{t\rightarrow 0} t\lVert f(w_1(t))-f(w_2(t))\rVert_{L^r_{rad}}=0.
		\end{align*}
		We can then apply \cite[Lemma 4.1]{glogic2025non} to conclude that
		\begin{multline*}
			\lVert  \Gamma(w_1)-\Gamma(w_2)\rVert_{Z^{T'}}\\\lesssim
            \left(\eps+(T')^{\frac{1}{p-1}-\frac{d}{2r}}(M+M^{p-1}+\norm{z}_{C([0,T'];L^{r,pr})}+\norm{z}_{C([0,T'];L^{r,pr})}^{p-1})\right)
            \lVert w_1-w_2\rVert_{Z^{T'}}\\  \leq \frac{1}{2}\lVert w_1-w_2\rVert_{Z^{T'}},
		\end{multline*}
		for $\eps$ and $T'$ small enough. 
        Namely, calling $C'$ the hidden constant in the inequality above, in this case we require 
        \begin{align*}
            C' \eps^{p-1}&\leq 1/4
            \\
            T'&=\inf\left\{t>0:C\left\{\norm{z}_{C([0,t];L^{r,pr}_{rad})} +t^{\frac{1}{p-1}-\frac{d}{2r}}(M^2+M^{p}+\norm{z}_{C([0,t];L^{r,pr}_{rad})}^2+\norm{z}_{C([0,t];L^{r,pr}_{rad})}^p)\right\} \leq M/2\right\}\\ & \wedge 
\inf\left\{t>0:C't^{\frac{1}{p-1}-\frac{d}{2r}}(M+M^{p-1}+\norm{z}_{C([0,t];L^{r,pr})}+\norm{z}_{C([0,t];L^{r,pr})}^{p-1})> \frac{1}{4}\right\}\wedge e^T.
\end{align*}
This completes the proof.    
\end{proof}
As the proof above shows, by solving the Cauchy problem for $w_1$ and $w_2$, we can construct each solution up to the random time
\begin{align*}
\mathcal{T}_i:=T'_i=e^T\wedge T_i \wedge \inf_{t\geq 0}\{\norm{z}_{C([0,t];L^{r,pr})}\geq C_i\},\ i\in\{1,2\}.
\end{align*}
Here, $T_i$ and $C_i$ are deterministic constants depending on $i$, $p$, and $r$, as well as on the deterministic initial condition of the PDE, which is independent of $i\in{1,2}$. Since $z$ has continuous paths in $L^{q,pr}$, $\mathcal{T}_i$ are stopping times. 
The $w_i$, as defined in \autoref{non_linear_singular_pde_noise}, are progressively measurable processes on $(0,+\infty)$ with values in $L^r \cap L^{pr}$, once extended by setting $w_i(t) \equiv w_i(\mathcal{T}_i)$ for $t \geq \mathcal{T}_i$.
 Moreover, due to the continuity of $w_i$ in $L^r$, the $w_i$ are defined for all times and consequently adapted processes on $L^r$. The choice of the extension of the $w_i$ after the stopping times is not the only possible one. However, in the following, we will show our non-uniqueness by considering the two solutions before $\mathcal{T}_1\wedge \mathcal{T}_2$. Therefore, the latter choice does not affect the proof of our result, which is the content of the next subsection.
\subsection{Proof of \autoref{Thm:main_additive_noise}}\label{sec:localization}
     Define $w_0:=\tilde{u}_0-u_0$. We can now invoke \autoref{thm:existence_ancient_solutions} and \autoref{non_linear_singular_pde_noise} to obtain two processes
	\begin{align*}
		u_1(t,x)&=z(t,x)+\frac{1}{t^{\frac{1}{p-1}}}\bar{U}\left(\frac{|x|}{\sqrt{t}}\right) - w_1(t,x),\\ u_{2}(t,x)&=z(t,x)+\frac{1}{t^{\frac{1}{p-1}}}\bar{U}\left(\frac{|x|}{\sqrt{t}}\right)+\frac{1}{t^{\frac{1}{p-1}}}\psi\left(\ln t,\frac{|x|}{\sqrt{t}}\right) - w_2(t,x),
	\end{align*}
	which are weak solutions of \eqref{stochastic_heat} on $(0,\mathcal{T}_1\wedge \mathcal{T}_2)$ for $\mathcal{T}_1$ and $\mathcal{T}_2$ stopping times defined at the end of the previous subsection. In particular, one can easily check that $u_1, u_2\in L^p_{{loc}}((0,\mathcal{T}_1\wedge \mathcal{T}_2)\times \R^d)\quad \mathbb{P}-a.s$. We claim that they are, in fact, two different local mild $L^q$-solutions on $[0,\mathcal{T}_1\wedge \mathcal{T}_2]$ with the same initial datum $u_0$. By construction, $u_0$ is compactly supported, and therefore in $L^q(\R^d)$. It remains to show that $u_1, u_2\in C([0,\mathcal{T}_1\wedge\mathcal{T}_2],L^q(\R^d))\quad \mathbb{P}-a.s.$, they have the required measurability properties and $u_1\neq u_2.$ We show the first property in the form of a lemma.
	
	\begin{lemma}\label{Continuity_uniform_bound}
		Let $i\in \{1,2\}$, $u_i\in C([0,\mathcal{T}_i],L^q(\R^d))\quad \mathbb{P}-a.s.$ for each $1\leq q<q_c$. Moreover $u_i(\cdot\wedge \mathcal{T}_i)$ is an adapted process with values in $L^q(\R^d)$.  
	\end{lemma}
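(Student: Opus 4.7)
The plan is to split $u_i = z + v_i$ with $v_i := \bar u + u'_i - w_i$ (where we set $u'_1 := 0$ and $u'_2 := u'$), and treat the two summands separately. By \autoref{HP_noise_1} and the regularity of the stochastic convolution recalled in \autoref{Noise_not}, $z$ has $\mathbb{P}$-a.s.\ continuous paths in $L^{q,pr}_{rad}$ and is $(\mathcal{F}_t)$-adapted; hence it suffices to prove that $v_i \in C([0,\mathcal{T}_i], L^q(\R^d))$ $\mathbb{P}$-a.s.\ and that $v_i(\cdot \wedge \mathcal{T}_i)$ is adapted.

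For continuity in $L^q$, I would start from the PDE
\[
\partial_t v_i = \Delta v_i + |u_i|^{p-1} u_i, \qquad v_i(0) = u_0,
\]
which can be verified by combining the equation for $w_i$ from \autoref{subsec_loc} with the fact that $\bar u$ and $\bar u + u'$ are self-similar solutions of the deterministic unforced problem. Since $u_0 = \tilde u_0 \mathbf{1}_{[0,\bar R]}(|x|)$ is compactly supported with singularity $|x|^{-2/(p-1)}$ only at the origin, one checks directly that $u_0 \in L^q(\R^d)$ for every $1 \leq q < q_c$. Duhamel then gives
\[
v_i(t) = e^{t\Delta} u_0 + \int_0^t e^{(t-s)\Delta} |u_i(s)|^{p-1} u_i(s)\, ds,
\]
the linear term lying in $C([0,T], L^q)$ by strong continuity of the heat semigroup. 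For the Duhamel integral, the plan is to combine the $Z^{T'_i}$ estimates on $w_i$ from \autoref{non_linear_singular_pde_noise}, the explicit scaling of $\bar u, u'$ in Lebesgue spaces, and the pathwise $L^{r,pr}$-continuity of $z$ to show that $|u_i(s)|^{p-1} u_i(s)$ lies in $L^a$ for an appropriate $a \leq q$ with time-integrable norm as $s \to 0^+$; the standard heat-kernel bound $\|e^{\sigma \Delta} f\|_{L^q} \lesssim \sigma^{-\frac{d}{2}(1/a - 1/q)} \|f\|_{L^a}$ then delivers continuity of the integral in $L^q$, the vanishing of the integral in $L^q$ as $t \to 0^+$ giving continuity at the initial time.

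The hard part will be the choice of $a \leq q$ making the $L^a$-norm of the nonlinear source integrable in time: when $pa > q_c$ this is direct, since $\bar u(s) \in L^{pa}$ individually with blow-up rate $s^{-\frac{1}{p-1} + \frac{d}{2pa}}$, and the integrability $\int_0^T \|\bar u(s)\|_{L^{pa}}^p\, ds < \infty$ is equivalent to $q < q_c$; for smaller $q$ one has to exploit the cancellation at spatial infinity between $\bar u(t,x) \sim \ell(\bar\alpha) |x|^{-2/(p-1)}$ and $w_i(t,x)$, the latter inheriting the same asymptotic from $w_0 = \tilde u_0 \mathbf{1}_{|x| > \bar R}$, exactly as in the deterministic argument of \cite{glogic2025non}. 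The stochastic contribution is benign, since $z$ is $\mathbb{P}$-a.s.\ in $L^q \cap L^{pr}$ by the noise assumption. Finally, adaptedness of $u_i(\cdot \wedge \mathcal{T}_i)$ is inherited from the contraction fixed-point construction of $w_i$ in $Z^{T'_i}$: the data of the contraction depend measurably on $z|_{[0,t]}$, so $w_i(t \wedge \mathcal{T}_i)$ is $\mathcal{F}_{t \wedge \mathcal{T}_i}$-measurable, and together with the deterministic $\bar u, u'_i$ and the adapted $z$ one concludes that $u_i(\cdot \wedge \mathcal{T}_i)$ is an adapted $L^q$-valued process.
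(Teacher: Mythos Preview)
Your reduction to $v_i := u_i - z = \bar u + u'_i - w_i$ and the PDE it satisfies is correct, as is the observation that $u_0 \in L^q$ for every $q < q_c$. The gap is in the treatment of the Duhamel integral. You propose to place $|u_i(s)|^{p-1}u_i(s)$ in some $L^a$ with $a \leq q$ and then smooth with the heat kernel, but this cannot work as stated: the only global integrability on $w_i$ coming from \autoref{non_linear_singular_pde_noise} is $w_i \in L^r$ (uniformly) and $w_i \in L^{pr}$ (with time weight), which puts $|w_i|^p$ only in $L^a$ for $a \geq r/p = q_a > q$; there is no control in $L^a$ for any $a \leq q$. Your fallback --- a cancellation at spatial infinity between $\bar u(t,\cdot)$ and $w_i(t,\cdot)$ --- is neither how the deterministic argument in \cite{glogic2025non} proceeds nor something one can extract from the contraction in $Z^{T'}$: the fixed point gives no pointwise asymptotics on $w_i$, so there is no reason it should retain the exact $|x|^{-2/(p-1)}$ tail of $w_0$ for $t>0$.

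The paper (and \cite[Section 4]{glogic2025non}) avoids this obstruction by a localization argument rather than a global Duhamel. One multiplies by a cutoff $\chi_{x_0,R}$ and writes the mild formula for $v_i \chi_{x_0,R}$, which is legitimate since $\bar u$ is smooth for $t>0$ and $w_i \in L^r \subset L^q_{loc}$, so $v_i \chi_{x_0,R} \in C([0,\mathcal{T}_i], L^q)$ a priori. The key step is to interpolate the localized $L^{pq}$ norm between the (unknown) local $L^q$ norm and the (known, time-singular) global $L^r$ norm of $v_i$: since $r > pq$,
\[
\|\chi_{x_0,R}^{1/p} v_i(s)\|_{L^{pq}}^p \leq \|\chi_{x_0,R}^{1/p} v_i(s)\|_{L^q}^{\frac{r-pq}{r-q}} \|v_i(s)\|_{L^r}^{\frac{r(p-1)}{r-q}} \lesssim \|v_i(s)\|_{L^q(B_{2R}(x_0))} + \|v_i(s)\|_{L^r}^{r/q},
\]
with $\|v_i(s)\|_{L^r}^{r/q} \lesssim s^{-\frac{r}{q}(\frac{1}{p-1}-\frac{d}{2r})}$ time-integrable precisely by the choice of $r$. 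Taking $\sup_{x_0}$ and applying Gr\"onwall to $v_R(t) := \sup_{x_0} \|v_i(t)\|_{L^q(B_R(x_0))}$ gives a bound uniform in $R$; letting $R \to \infty$ yields $v_i \in L^\infty((0,\mathcal{T}_i), L^q)$, and continuity follows from the same localized mild formula. For adaptedness, note also that the fixed point only gives $w_i$ adapted as an $L^r$-valued process; since $L^r \not\hookrightarrow L^q$ on $\R^d$, one still needs the additional step of testing against compactly supported $\phi \in L^{q'} \cap L^{r'}$ and passing to the limit to conclude $L^q$-adaptedness.
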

	
	\begin{proof}
        Since $z\in C_{loc}([0,+\infty);L^{q,pr}(\R^d))$ and has the required measurability properties, it is enough to show that $v_i:=u_i-z$ belongs to $ C([0,\mathcal{T}_i],L^q(\R^d))$ and that the stopped process is adapted.
		We prove the lemma only for $v_2$, the other case being analogous and simpler. We therefore write, for convenience, $v$ (resp.~$w,\mathcal{T}$) in place of $v_2$ (resp.~$w_2,\ \mathcal{T}_2$). Let us introduce a radially symmetric cut off $\chi\in C^{\infty}_c(\R^d)$ by
		\begin{align*}
			\chi(x):=\begin{cases}
				1\quad \text{if } \lvert x\rvert\leq 1\\
				0\quad \text{if } \rvert x\rvert\geq 2,
			\end{cases}\quad \chi(x)\geq 0,\quad \chi(x)\leq \chi(x') \quad \text{if} \quad \lvert x\rvert\geq \lvert x'\rvert.
		\end{align*}
		Furthermore, for $x_0\in \R^d,\, R\geq 1$, define $\chi_{x_0, R}(x):=\chi\left(\frac{x-x_0}{R}\right).$ Due to the properties of $\bar{U},\psi, w$, we have that for each $x_0\in \R^d,\, R\geq 1$ \begin{align}\label{regularity_cutoff}
			v\chi_{x_0,R}\in C([0,\mathcal{T}],L^q(\R^d))\cap C((0,\mathcal{T}],L^r(\R^d))\quad \mathbb{P}-a.s.,
		\end{align}            
		and for each $t,t_0\in (0,\mathcal{T}]$ with $t_0\leq t$, that
		\begin{align*}
			v(t)\chi_{x_0,R}= \, &e^{(t-t_0)\Delta}v(t_0)\chi_{x_0,R}+\int_{t_0}^t e^{(t-s)\Delta}\left(v(s)\Delta\chi_{x_0,R}-2\operatorname{div}(v(s)\nabla \chi_{x_0,R})\right) ds\\ & +\int_{t_0}^t e^{(t-s)\Delta} \chi_{x_0,R} \lvert v(s)+z(s)\rvert^{p-1}(v(s)+z(s)) ds,
		\end{align*}
		in  $L^q(\R^d)$, where $e^{t\Delta}$ is the heat semigroup on $\R^d$.
		Considering the $L^q$ norm of the equation above, by triangle inequality and the definition of $\chi_{x_0,R}$ we get easily  that
		\begin{align*}
			\norm{ v(t)}_{L^q(B_{R}(x_0))}\lesssim \, & \norm{v(t_0)\chi_{x_0,R}}_{L^q(\R^d)}+\int_{t_0}^t \left(1+\frac{1}{\sqrt{t-s}}\right) \norm{ v(s)}_{L^q(B_{2R}(x_0))}ds\\ & +\int_{t_0}^t\norm{ \chi_{x_0,R} \lvert v(s)+z(s)\rvert^{p-1}(v(s)+z(s))}_{L^q(\R^d)} ds ,
		\end{align*}
		the hidden constant above being independent of $R$, $t_0,$ $t$ and $x_0$. Let us analyze further the last term.
		By interpolation and Young's inequality, since $r>pq$, we have
		\begin{align*}
			\norm{ \chi_{x_0,R} \lvert v(s)+z(s)\rvert^{p-1}(v(s)+z(s))}_{L^q(\R^d)}\lesssim \, & \norm{ \chi^{1/p}_{x_0,R} v(s)}_{L^{pq}(\R^d)}^p+\norm{z(s)}_{L^{q,r}(\R^d)}^p\\ \leq \, & \norm{ \chi^{1/p}_{x_0,R} v(s)}_{L^{q}(\R^d)}^{\frac{r-pq}{r-q}}\norm{  v(s)}_{L^{r}(\R^d)}^{\frac{r(p-1)}{r-q}}+\norm{z(s)}_{L^{q,r}(\R^d)}^p\\  \lesssim \, & \norm{ \chi^{1/p}_{x_0,R} v(s)}_{L^{q}(\R^d)}+\norm{  v(s)}_{L^{r}(\R^d)}^{\frac{r}{q}}+\norm{z(s)}_{L^{q,r}(\R^d)}^p\\ \lesssim \, & \norm{  v(s)}_{L^{q}(B_{2R}(x_0))}+\frac{1}{s^{\frac{r}{q}\left(\frac{1}{p-1}-\frac{d}{2r}\right)}}\left(\norm{\bar{U}}_{L^r(\R^d)}^{\frac{r}{q}}+\eps^{\frac{r}{q}}\right)\\ & +\norm{w}_{C([0,\mathcal{T}],L^r(\R^d))}^{\frac{r}{q}}+\norm{z}_{C([0,\mathcal{T}];L^{q,r}(\R^d))}^p.
		\end{align*}
		Therefore 
		\begin{align*}
			\norm{ v(t)}_{L^q(B_{R}(x_0))}\lesssim \,& \norm{v(t_0)\chi_{x_0,R}}_{L^q(\R^d)}+\int_{t_0}^t \left(1+\frac{1}{\sqrt{t-s}}\right) \norm{ v(s)}_{L^q(B_{2R}(x_0))}ds\\ & +\int_{t_0}^t 1+\frac{1}{s^{\frac{r}{q}\left(\frac{1}{p-1}-\frac{d}{2r}\right)}} ds.
		\end{align*}
		Since $u_0\in L^q(\R^d)$, $\frac{r}{q}\left(\frac{1}{p-1}-\frac{d}{2r}\right)<1$, and the relation \eqref{regularity_cutoff} holds, we can let $t_0\rightarrow 0$ to get
		\begin{align*}
			\norm{ v(t)}_{L^q(B_{R}(x_0))}&\lesssim \left(\norm{u_0}_{L^q(\R^d)}+1\right)+\int_{0}^t \left(1+\frac{1}{\sqrt{t-s}}\right) \norm{ v(s)}_{L^q(B_{2R}(x_0))}ds.   
		\end{align*}
		Now, let us introduce the function \begin{align*}
			v_R(t)=\sup_{x_0\in \R^d}\norm{ v(t)}_{L^q(B_{R}(x_0))}.
		\end{align*}
		Obviously
		\begin{align*}
			\norm{ v(t)}_{L^q(B_{2R}(x_0))}\lesssim v_R(t)
		\end{align*}
		and 
		\begin{align*}
			\norm{ v(t)}_{L^q(B_{R}(x_0))}  &\lesssim \left(\norm{u_0}_{L^q(\R^d)}+1\right)+\int_{0}^t \left(1+\frac{1}{\sqrt{t-s}}\right) v_R(s)ds.  
		\end{align*}
		Taking the supremum in $x_0$ of the expression above and applying Gr\"onwall's inequality, we get 
		\begin{align*}
			v_R(t)\lesssim \norm{u_0}_{L^q(\R^d)}+1,
		\end{align*}
		for all $t \in [0,\mathcal{T}]$.
		By letting $R\rightarrow +\infty$, we get that $v\in L^{\infty}((0,\mathcal{T}),L^q(\R^d))\quad \mathbb{P}-a.s.$ and consequently the same applies also to $u$. In order to show the continuity of $u(t)$ in $L^q$, we use again the mild formulation for $v(t)$. Namely, for each $0\leq t_1\leq t_2\leq \mathcal{T}$, we have that
		\begin{align*}
			v(t_2)\chi_{x_0,R}-v(t_1)&=\left(e^{(t_2-t_1)\Delta}v(t_1)\chi_{x_0,R}-v(t_1)\right)\\ &+\int_{t_1}^{t_2} e^{(t-s)\Delta}\left(v(s)\Delta\chi_{x_0,R}-2\operatorname{div}(v(s)\nabla \chi_{x_0,R})\right) ds\\ & +\int_{t_1}^{t_2} e^{(t-s)\Delta} \chi_{x_0,R} \lvert v(s)+z(s)\rvert^{p-1}(v(s)+z(s)) ds,
		\end{align*}
		in $L^q(\R^d)$.   
		Taking the $L^q$-norm of the expression above and letting $R\rightarrow +\infty$, we obtain, by analogous considerations to the ones employed to obtain the uniform bound on the $L^q$-norm
		\begin{align*}
			\norm{v(t_2)-v(t_1)}_{L^q(\R^d)} = \, &\limsup_{R\rightarrow +\infty}\norm{v(t_2)\chi_{x_0,R}-v(t_1)}_{L^q(\R^d)}\\  \leq \, & \norm{e^{(t_2-t_1)\Delta}v(t_1)-v(t_1)}_{L^q(\R^d)}\\ & +\sup_{R\geq 1}\norm{\int_{t_1}^{t_2} e^{(t-s)\Delta}\left(v(s)\Delta\chi_{x_0,R}-2\operatorname{div}(v(s)\nabla \chi_{x_0,R})\right)ds}_{L^q(\R^d)}\\ & +\sup_{R\geq 1}\norm{\int_{t_1}^{t_2} e^{(t-s)\Delta} \chi_{x_0,R} \lvert v(s)+z(s)\rvert^{p-1}(v(s)+z(s)) ds }_{L^q(\R^d)}\\  \lesssim\, & \norm{e^{(t_2-t_1)\Delta}v(t_1)-v(t_1)}_{L^q(\R^d)}\\ & +\int_{t_1}^{t_2} \left(1+\frac{1}{\sqrt{t_2-s}}\right) \norm{ v(s)}_{L^q(\R^d)}ds+\int_{t_1}^{t_2} 1+\frac{1}{s^{\frac{r}{q}\left(\frac{1}{p-1}-\frac{d}{2r}\right)}} ds.
		\end{align*}
		Since we already proved that $v\in L^{\infty}((0,\mathcal{T}_1\wedge\mathcal{T}_2),L^q(\R^d))\quad \mathbb{P}-a.s.,$ the continuity of $v$ follows from the last inequality, and consequently, so does the continuity of $u$.” \\
        Concerning the measurability, due to the continuity of $u\in C([0,\mathcal{T}];L^q)$, it is enough to show that for each $t\in [0,+\infty),\ v(t\wedge \mathcal{T})$ is $\mathcal{F}_t$ measurable. Therefore, according to \cite[Proposition 1.1]{da2014stochastic}, it is enough to show that for each $ \phi\in L^{q'}(\R^d)$, the real-valued random variable $v_{t,\phi}:=\int_{\R^d} v(t\wedge \mathcal{T},x)\phi(x)dx$ is $\mathcal{F}_t$-measurable.
        Due to the comments below the proof of \autoref{non_linear_singular_pde_noise}, we already know that the latter holds for any $\phi\in L^{r'}(\R^d)$. However, by dominated convergence
        \begin{align*}
         v_{t,\phi}=\lim_{r\rightarrow +\infty}  v_{t,\phi\chi_{x_0,r}}\quad \mathbb{P}-a.s.
        \end{align*}
        Since $\phi\chi_{x_0,r}\in L^{r'}$, $v_{t,\phi\chi_{x_0,r}}$ is $\mathcal{F}_t$-measurable. Therefore,  $v_{t,\phi}$ is also $\mathcal{F}_t$-measurable, as it is the almost-sure limit and the filtration is complete. This completes the proof.
	\end{proof}
	It remains to show that $u_1\neq u_2$. Let us consider the $L^r$-norm of $u_1-u_2$. Thanks to \cite[Theorem 3.3]{glogic2025non}, the regularity of $z$, \autoref{non_linear_singular_pde_noise} and relation \eqref{item_instability}, we get that for each $t\leq \mathcal{T}_1\wedge \mathcal{T}_2$
	\begin{align*}
		\norm{u_1(t)-u_2(t)}_{L^r(\R^d)}&\geq {t^{-\left(\frac{1}{p-1}-\frac{d}{2r}\right)}}\norm{\psi(\ln t)}_{L^r(\R^d)}-\norm{w_1-w_2}_{C([0,T'],L^r(\R^d))}\\ &\gtrsim t^{-\left(\frac{1}{p-1}-\frac{d}{2r}-\lambda_{\bar{\alpha}}\right)}\frac{\norm{\bar{U}^{lin}}_{L^{r}(\R^d)}}{2}-1\rightarrow +\infty \quad\mathbb{P}-a.s.
	\end{align*}
	as $t\rightarrow 0$.
\section{Almost Everywhere Local Well-Posedness for Random Initial Data}\label{Random_data_well_posed} 
\subsection{Randomization Estimates}
In this section we study the effect of the randomization \eqref{randomization} to the regularity on the initial conditions $u_0^{\omega}$ and the linear flow $e^{t\Delta}u_0^{\omega}$. This is addressed, respectively, in \autoref{reg_initial_condition} and \autoref{reg_stoch_conv} below.
\begin{lemma}\label{reg_initial_condition}
If $u_0\in M^{q,q}(\R^d)$ for $q<2$, then $u_0^\omega\in L^q(\R^d)$ $\mathbb{P}-a.s.$ If $u_0\in L^q(\R^d)\cap L^2(\R^d)$ for $q\geq 2$, then $u_0^\omega\in L^q(\R^d)\cap L^2(\R^d)$ $\mathbb{P}-a.s.$  
\end{lemma}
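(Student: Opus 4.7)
The plan is to reduce everything to a pointwise Gaussian moment estimate on $u_0^\omega(x)$ and then interchange the sum with the spatial integration in the way that respects the range of $q$.

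First I would unfold the constraint $h_{-k}=\overline{h_k}$ and the reality of $u_0$ to rewrite
\begin{equation*}
u_0^\omega(x)=h_0\,P_0 u_0(x)+2\sum_{k\in\mathcal{I}}\bigl(h_k^1\,\Re P_k u_0(x)-h_k^2\,\Im P_k u_0(x)\bigr),
\end{equation*}
which is, pointwise in $x$, a series of independent centered real Gaussians. Using the identity $|P_{-k}u_0|=|P_k u_0|$ (which follows from $\psi_{-k}(\xi)=\psi_k(-\xi)$ and the reality of $u_0$), standard Gaussian hypercontractivity yields the pointwise bound
\begin{equation*}
\mathbb{E}|u_0^\omega(x)|^q\lesssim_q\Bigl(\sum_{k\in\Z^d}|P_k u_0(x)|^2\Bigr)^{q/2}\qquad\text{for all }q\geq 1.
\end{equation*}

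For the case $q<2$ I would then use the embedding $\ell^2\hookrightarrow\ell^q$ (valid since $q\leq 2$), which gives $\bigl(\sum_k|P_k u_0(x)|^2\bigr)^{q/2}\leq\sum_k|P_k u_0(x)|^q$. Integrating in $x$ and invoking the definition \eqref{modulation_norm} of the modulation-space norm yields
\begin{equation*}
\mathbb{E}\|u_0^\omega\|_{L^q}^q\lesssim\sum_{k\in\Z^d}\|P_k u_0\|_{L^q}^q=\|u_0\|_{M^{q,q}}^q<\infty,
\end{equation*}
hence $u_0^\omega\in L^q(\R^d)$ almost surely. (Applying the same estimate to partial sums also justifies convergence of the series defining $u_0^\omega$ in $L^q(\Omega;L^q(\R^d))$.)

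For the case $q\geq 2$ I would treat the two target spaces separately. The $L^2$ estimate is immediate from Plancherel: since each $\psi_k$ is supported in $k+[-1,1]^d$, the sum $\sum_k\psi_k^2$ is uniformly bounded by a constant depending only on $d$, and (using that $h_k$ and $h_\ell$ are uncorrelated except when $\ell\in\{k,-k\}$, together with $\mathbb{E}(h_k^2)=0$) a direct Fubini computation gives $\mathbb{E}\|u_0^\omega\|_{L^2}^2\lesssim\|u_0\|_{L^2}^2$. For the $L^q$ piece, integrating the pointwise Gaussian bound and using Minkowski's integral inequality (permissible since $q/2\geq 1$) yields
\begin{equation*}
\mathbb{E}\|u_0^\omega\|_{L^q}^q\lesssim\Bigl(\sum_{k\in\Z^d}\|P_k u_0\|_{L^q}^2\Bigr)^{q/2}.
\end{equation*}
Since $P_k u_0$ has Fourier support contained in the unit cube $k+[-1,1]^d$, Bernstein's inequality gives $\|P_k u_0\|_{L^q}\lesssim\|P_k u_0\|_{L^2}$ with a constant independent of $k$, so the right-hand side is controlled by $\bigl(\sum_k\|P_k u_0\|_{L^2}^2\bigr)^{q/2}\lesssim\|u_0\|_{L^2}^q$ by the almost-orthogonality used in the $L^2$ step. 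This gives $u_0^\omega\in L^q(\R^d)$ almost surely; combined with the $L^2$ bound, $u_0^\omega\in L^2(\R^d)\cap L^q(\R^d)$ almost surely.

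The main (minor) obstacle is the careful bookkeeping of the conjugation constraint $h_{-k}=\overline{h_k}$ when verifying the pointwise Khintchine-type bound, together with the use of Bernstein at the unit scale in the $q>2$ step, where we trade the $L^q$ norm of each block for its $L^2$ norm uniformly in $k$.
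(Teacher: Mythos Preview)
Your argument is correct, and for $q\geq 2$ it coincides with the paper's proof (pointwise Khintchine, Minkowski to pull the $\ell^2$-sum outside, then unit-scale Bernstein to pass from $L^q$ to $L^2$ on each block).

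For $q<2$ you take a genuinely different route. The paper first invokes the embedding $M^{q,q}(\R^d)\hookrightarrow L^q(\R^d)$ from the modulation-space literature, then bounds $\mathbb{E}\|u_0^\omega\|_{M^{q,q}}^2$ by applying Minkowski with exponent $2/q>1$ to the sum $\sum_k\|P_k u_0\|_{L^q}^q|h_k|^q$. You instead stay pointwise: Khintchine gives $\mathbb{E}|u_0^\omega(x)|^q\lesssim\bigl(\sum_k|P_k u_0(x)|^2\bigr)^{q/2}$, and then the elementary sequence inequality $\|a\|_{\ell^2}\leq\|a\|_{\ell^q}$ (note: this is the embedding $\ell^q\hookrightarrow\ell^2$ for $q\leq 2$, not $\ell^2\hookrightarrow\ell^q$ as you wrote, though the inequality you actually use is the correct one) yields $\sum_k|P_k u_0(x)|^q$, which integrates to $\|u_0\|_{M^{q,q}}^q$. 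Your approach is more self-contained in that it avoids citing the $M^{q,q}\hookrightarrow L^q$ embedding as an external fact; the paper's approach, on the other hand, gives the slightly sharper conclusion $u_0^\omega\in L^2(\Omega;L^q)$ rather than $L^q(\Omega;L^q)$, though for the almost-sure statement this makes no difference.
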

\begin{proof}
    In the case $q<2$, we have, by the embedding of $M^{q,q}(\R^d)\hookrightarrow L^q(\R^d)$ \emph{cf.} \cite[Theorem 1.4]{kobayashi2011inclusion} and Minkowski's inequality
    \begin{align*}
        \mathbb{E}\left[\norm{u_0^{\omega}}_{L^q}^2\right]&\lesssim_{q}\mathbb{E}\left[\norm{u_0^{\omega}}_{M^{q,q}}^2\right]\\ & =\mathbb{E}\left[\left(\sum_{k\in \Z^d}\norm{P_k u_0}^q_{L^q}\lvert h_k\rvert^q\right)^{2/q}\right]\\ & \lesssim_q \left(\sum_{k\in \Z^d} \left(\mathbb{E}\left[\norm{P_k u_0}_{L^q}^2\lvert h_k\rvert^2\right]\right)^{q/2}\right)^{2/q}\\ & =\norm{u_0}_{M^{q,q}}^2.
    \end{align*}
  In the case $q\geq 2 $ we have instead for each $\rho\geq 2$ by ultracontractivity of Gaussian random variables, see for example \cite[Lemma 2.1]{burq2008random} for a proof in this framework, Minkowski's integral inequality and \emph{unit-scale} Bernstein estimate, see for example \cite[Lemma 2.1]{luhrmann2014random}
  \begin{align*}
    \mathbb{E}\left[\norm{u_0^{\omega
      }}_{L^{\rho}}^{\rho}\right]& \lesssim {\rho}^{{\rho}/2} \norm{\left(\sum_{k\in \Z^d}\lvert P_k u_0\rvert^2\right)^{1/2}}_{L^{{\rho}}}^{{\rho}}\\ & \lesssim {\rho}^{{\rho}/2} \left(\sum_{k\in \Z^d}\norm{P_k u_0}_{L^\rho}^2\right)^{\rho/2}\\ & \lesssim {\rho}^{{\rho}/2} \left(\sum_{k\in \Z^d}\norm{P_k u_0}_{L^2}^2\right)^{\rho/2}\lesssim {\rho}^{{\rho}/2}  \norm{u_0}_{L^2}^{\rho}.
  \end{align*}
  The latter implies the claim by choosing $\rho=2$ or $\rho=q\geq 2.$
\end{proof}
\begin{remark}\label{further_integrability_rmk}
    As the proof above shows, if $u_0\in L^q(\R^d)\cap L^2(\R^d)$ for $q\geq 2$ then $u_0^\omega\in L^{\rho}(\R^d) \ \mathbb{P}-a.s.$ for each $\rho\in [2,+\infty).$
\end{remark}

\begin{lemma}\label{reg_stoch_conv}
    Let $T>0$ and $u_0\in H^{-\alpha}(\R^d)$ for some $ \alpha\geq 0$. Let $\theta_1\geq \theta_2\geq \theta_3\geq 2,\ \sigma>0$ and $\gamma\in \mathbb{R}$ such that \begin{align}\label{integrability_cond}
        (\sigma+\alpha-2\gamma)\theta_3<2.
    \end{align} 
    Then 
    \begin{align}\label{regularity_bound}
     \norm{t^\gamma (I-\Delta )^{\sigma/2}e^{\Delta t}u_0^{\omega}}_{L^{\theta_1}(\Omega;L^{\theta_3}(0,T;L^{\theta_2}(\R^d)))}& \leq C_T \sqrt{\theta_1}\norm{u_0}_{H^{-\alpha}},   
    \end{align}
    for some $C_T>0$ depending on $T,\alpha,\sigma,\gamma,\theta_2,\theta_3$ and approaching $0$ as $T\rightarrow 0$.
    Moreover, by calling
\begin{align*}
    E_{\lambda,T,u_0}=\{\omega\in \Omega:  \norm{t^\gamma (I-\Delta )^{\sigma/2}e^{t\Delta}u_0^{\omega}}_{L^{\theta_3}(0,T;L^{\theta_2}(\R^d))}\geq \lambda \},
\end{align*}
then there exist $c_1,c_2$ independent of $\lambda, u_0$ such that
\begin{align}\label{exponential_estimate}
    \mathbb{P}\left(E_{\lambda,T,u_0}\right)\leq c_1 e^{-c_2\frac{\lambda^2}{C_T\norm{u_0}^2_{H^{-\alpha}(\R^d)}}}.
\end{align}
\end{lemma}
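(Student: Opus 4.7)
The plan is to combine Minkowski interchanges with Gaussian hypercontractivity, unit-scale Bernstein, and heat semigroup smoothing, and then integrate over time using the admissibility condition \eqref{integrability_cond}. First I would expand $u_0^\omega = \sum_k h_k P_k u_0$ and note that, since $\theta_1 \geq \theta_2 \geq \theta_3 \geq 2$ and $P_k$ commutes with both the heat flow and the fractional power, Minkowski's integral inequality lets me push $L^{\theta_1}(\Omega)$ to the innermost position and pull the $\ell^2_k$ coming from Gaussian ultracontractivity back outside. Quoting the standard bound used in \cite[Lemma 2.1]{burq2008random},
\[
    \Bigl\| \sum_k h_k v_k(t,x) \Bigr\|_{L^{\theta_1}(\Omega)} \lesssim \sqrt{\theta_1}\Bigl(\sum_k |v_k(t,x)|^2\Bigr)^{1/2},
\]
where $v_k(t,x) := t^\gamma (I-\Delta)^{\sigma/2} e^{t\Delta} P_k u_0(x)$, I reduce matters to estimating $\| v_k \|_{L^{\theta_3}(0,T;L^{\theta_2})}$ and summing the squares in $k$.

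Next I would exploit the fact that $v_k(t,\cdot)$ has Fourier support in $k+[-1,1]^d$: the unit-scale Bernstein estimate (\emph{cf.}\ \cite[Lemma 2.1]{luhrmann2014random}) gives $\| v_k(t,\cdot) \|_{L^{\theta_2}} \lesssim \| v_k(t,\cdot) \|_{L^2}$ with constant independent of $k$, and on this cube the symbol of $(I-\Delta)^{\sigma/2} e^{t\Delta}$ is uniformly comparable to $(1+|k|^2)^{\sigma/2} e^{-c t |k|^2}$. By Plancherel,
\[
    \| v_k(t,\cdot) \|_{L^2} \lesssim t^{\gamma}(1+|k|^2)^{\sigma/2} e^{-c t|k|^2} \| P_k u_0 \|_{L^2}.
\]
Raising to the power $\theta_3$ and integrating in $t \in (0,T)$ reduces everything to the elementary integral $\int_0^T t^{\gamma\theta_3} e^{-c\theta_3 t|k|^2}\,dt$, which I would control by interpolating between the trivial bound $T^{\gamma\theta_3+1}$ and the Gaussian bound $|k|^{-2(\gamma\theta_3+1)}$: for any $\beta \in (0,1)$,
\[
    \int_0^T t^{\gamma \theta_3} e^{-c\theta_3 t|k|^2}\,dt \lesssim T^{\beta(\gamma\theta_3+1)} (1+|k|^2)^{-(1-\beta)(\gamma\theta_3+1)}.
\]

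The assumption $(\sigma+\alpha-2\gamma)\theta_3 < 2$ is used precisely to select $\beta \in (0,1)$ for which $\sigma\theta_3/2 - (1-\beta)(\gamma\theta_3+1) \leq -\alpha\theta_3/2$, so that after summing I can absorb the residual $k$-growth into the weighted $\ell^2$ norm
\[
    \sum_k (1+|k|^2)^{-\alpha} \| P_k u_0 \|_{L^2}^2 \lesssim \| u_0 \|_{H^{-\alpha}}^2,
\]
which follows from Plancherel and the bounded overlap of $\{\psi_k\}$. This produces \eqref{regularity_bound} with a constant $C_T$ that tends to $0$ as $T \to 0$ thanks to the positive factor $T^{\beta(\gamma\theta_3+1)}$. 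The tail bound \eqref{exponential_estimate} then follows by Chebyshev applied to the $\theta_1$-th moment: $\p(E_{\lambda,T,u_0}) \leq (C_T^{1/2}\sqrt{\theta_1}\,\| u_0 \|_{H^{-\alpha}}/\lambda)^{\theta_1}$, and optimizing over $\theta_1 \sim \lambda^2/(e\,C_T \| u_0 \|_{H^{-\alpha}}^2)$ yields the claimed Gaussian decay.

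The main obstacle is the sharp interpolation in the third paragraph: the strictness of \eqref{integrability_cond} is exactly what provides the slack needed to make the $k$-sum converge against the $H^{-\alpha}$ weight while still producing a genuinely positive power of $T$. Choosing $\beta$ carelessly gives a borderline divergent sum or a constant that does not vanish with $T$; the strict inequality is what reconciles both requirements simultaneously, and is also what dictates the stronger-than-classical condition on $p$ in \autoref{Fixed_point_random_initial_cond}.
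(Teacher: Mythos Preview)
Your argument is correct and reaches the same conclusion, but along a more elaborate path than the paper. After the shared opening (Minkowski to push $L^{\theta_1}(\Omega)$ inside, Gaussian hypercontractivity, unit-scale Bernstein to pass from $L^{\theta_2}_x$ to $L^2_x$), the paper keeps the $\ell^2_k$-sum \emph{inside} the $L^{\theta_3}_t$-integral, applies Parseval, and uses the pointwise symbol bound $(1+|\xi|^2)^{\sigma+\alpha}e^{-2|\xi|^2 t}\lesssim_{\sigma,\alpha} t^{-(\sigma+\alpha)}$ to reduce everything in one stroke to the single integral $\int_0^T t^{(2\gamma-\sigma-\alpha)\theta_3/2}\,dt$, which is finite precisely under \eqref{integrability_cond} and yields $C_T\sim T^{1/\theta_3+\gamma-(\sigma+\alpha)/2}$ directly. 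You instead pull $\ell^2_k$ outside $L^{\theta_3}_t$ via an additional Minkowski (legitimate since $\theta_3\geq 2$), evaluate the $t$-integral for each fixed $k$, and then need the interpolation between the trivial bound and the scaling bound, together with a careful choice of $\beta\in(0,1)$, to balance the positive $T$-power against the $(1+|k|^2)$-decay required for the $H^{-\alpha}$ sum. Both orderings are valid; the paper's avoids the interpolation step and the attendant bookkeeping entirely, while your route would be the natural one if finer frequency-localized information were needed. The tail bound via Chebyshev and optimization over $\theta_1$ is identical in both. One small caveat: your displayed interpolation inequality as written is only uniform for $T\lesssim 1$ (at $k=0$ the bound $T^{\gamma\theta_3+1}\leq T^{\beta(\gamma\theta_3+1)}$ reverses for large $T$), but this is harmless since finitely many low frequencies can be absorbed into a $T$-dependent constant without affecting the $T\to 0$ behavior.
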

\begin{proof}
Let us start by observing that $(I-\Delta )^{\sigma/2}e^{t\Delta }$ and the randomization commute.
\begin{align*}
    (I-\Delta )^{(\sigma+\alpha)/2}e^{t\Delta }u_0^{\omega}&=\left( (I-\Delta )^{(\sigma+\alpha)/2}e^{t\Delta }u_0\right)^{\omega}\\ & =\sum_{k\in \Z^d}\mathcal{F}^{-1}(\psi_k(\xi)(1+\lvert \xi\rvert^{2})^{(\sigma+\alpha)/2}e^{-\lvert \xi\rvert^2 t}\hat{u}_0(\xi)) h_k.
\end{align*}
Therefore, by Minkowski's integral inequality and ultracontractivity of Gaussian random variables, see e.g. \cite[Lemma 2.1]{burq2008random} for a proof in this framework, it holds by calling $u_k^{\alpha,\sigma}(x,t):=\mathcal{F}^{-1}\left(\psi_k(\xi)(1+\lvert \xi\rvert^{2})^{(\sigma+\alpha)/2}e^{-\lvert \xi\rvert^2 t}\frac{\hat{u}_0(\xi)}{(1+\lvert \xi\rvert^{2})^{\alpha/2}}\right)(x) ,$
\begin{align*}
 \norm{t^\gamma (I-\Delta )^{\sigma/2}e^{t\Delta }u_0^{\omega}}_{L^{\theta_1}(\Omega;L^{\theta_3}(0,T;L^{\theta_2}(\R^d)))} &= \norm{t^\gamma \sum_{k\in \Z^d} u_k^{\alpha,\sigma}(x,t) h_k(\omega)}_{L^{\theta_1}(\Omega;L^{\theta_3}(0,T;L^{\theta_2}(\R^d)))}   \\ & \lesssim \sqrt{\theta_1}\left(\int_0^T \left(t^{2\gamma}\sum_{k\in \Z^d} \norm{ u_k^{\alpha,\sigma}(x,t) }_{L^{\theta_2}(\R^d)}^2\right)^{\theta_3/2}\right)^{\frac{1}{\theta_3}}.
\end{align*}
Moreover, by \emph{unit-scale} Bernstein estimate, see for example \cite[Lemma 2.1]{luhrmann2014random}, since $\theta_2\geq 2$ we have also $\norm{ u_k^{\alpha,\sigma}(x,t) }_{L^{\theta_2}(\R^d)}\lesssim_{\theta_2} \norm{ u_k^{\alpha,\sigma}(x,t) }_{L^{2}(\R^d)}$. Therefore \begin{align*}
    \norm{t^\gamma (I-\Delta )^{\sigma/2}e^{t\Delta }u_0^{\omega}}_{L^{\theta_1}(\Omega;L^{\theta_3}(0,T;L^{\theta_2}(\R^d)))} & \lesssim_{\theta_2} \sqrt{\theta_1}\left(\int_0^T \left(t^{2\gamma}\sum_{k\in \Z^d} \norm{ u_k^{\alpha,\sigma}(x,t) }_{L^{2}(\R^d)}^2\right)^{\theta_3/2}\right)^{\frac{1}{\theta_3}}.
\end{align*}
The latter can be estimated directly. Indeed, using Parseval’s identity
\begin{align*}
    \sum_{k\in \Z^d} \norm{ u_k^{\alpha,\sigma}(x,t) }_{L^{2}(\R^d)}^2&= \sum_{k\in \Z^d}\int_{\R^d} \psi_k^2(\xi)(1+\lvert \xi\rvert^{2})^{\sigma+\alpha}e^{-2\lvert \xi\rvert^2 t}\frac{\lvert \hat{u}_0(\xi)\rvert^2}{(1+\lvert \xi\rvert^{2})^{\alpha}} d\xi\\ & \lesssim_{\sigma,\alpha} t^{-(\sigma+\alpha)}\sum_{k\in \Z^d}\int_{\R^d} \psi_k^2(\xi)\frac{\lvert \hat{u}_0(\xi)\rvert^2}{(1+\lvert \xi\rvert^{2})^{\alpha}} d\xi\\ & \lesssim_{\alpha} t^{-(\sigma+\alpha)}\norm{u_0}_{H^{-\alpha}(\R^d)}^2.
\end{align*}
Consequently
\begin{align*}
    \norm{t^\gamma (I-\Delta )^{\sigma/2}e^{t\Delta }u_0^{\omega}}_{L^{\theta_1}(\Omega;L^{\theta_3}(0,T;L^{\theta_2}(\R^d)))} & \lesssim_{\alpha,\sigma,\theta_2} \sqrt{\theta_1}\norm{u_0}_{H^{-\alpha}(\R^d)}\left(\int_0^T \left(t^{2\gamma-\sigma-\alpha}\right)^{\theta_3/2}\right)^{\frac{1}{\theta_3}}\\ & \lesssim_{\alpha,\sigma,\gamma,\theta_3}\sqrt{\theta_1}T^{\frac{1}{\theta_3}+\gamma-\frac{\sigma+\alpha}{2}}\norm{u_0}_{H^{-\alpha}(\R^d)}
\end{align*} 
under the assumption \eqref{integrability_cond}. This relation readily implies \eqref{regularity_bound}. Let us now move to \eqref{exponential_estimate}.
First let us observe that \eqref{exponential_estimate} trivially follows if $\lambda \leq \sqrt{\theta_2} \norm{u_0}_{H^{-\alpha}} e C_T$ for whatever choice of $c_2$ and $c_1\geq e^{c_2\theta_2 e^2 C_T^2}.$ In case of $\lambda >\sqrt{\theta_2} \norm{u_0}_{H^{-\alpha}}eC_T $, setting $\theta_1=\frac{\lambda^2}{\norm{u_0}_{H^{-\alpha}}^2 e^2C_T^2}\geq \theta_2$, by Markov's inequality
\begin{align*}
    \mathbb{P}\left(E_{\lambda,T,u_0}\right)&\leq \lambda^{-\theta_1}(C_T\sqrt{\theta_1}\norm{u_0}_{H^{-\alpha}})^{\theta_1}\\ & \leq e^{\frac{-\lambda^2}{\norm{u_0}_{H^{-\alpha}}^2 e^2 C_T^2}}.
\end{align*}
Therefore, the claim follows.
\end{proof}
\subsection{Proof of \autoref{Fixed_point_random_initial_cond}}\label{proof_thm_random}
Due to \autoref{reg_initial_condition}, we already know that ${u_0}^{\omega}\in L^q(\R^d)$ $\mathbb{P}$-a.s. Let us denote by $\hat{z}(t):=e^{t\Delta }u_0^{\omega}$.
Therefore ,in order to construct a mild solution $u$ of \eqref{Eq:Heat}, it is enough to find a solution in $C([0,\mathcal{T}];L^r(\R^d))$ of the integral equation
\begin{align*}
    v(t)=\int_0^t e^{(t-s)\Delta }\lvert v(s)+\hat{z}(s)\rvert^{p-1}(v(s)+\hat{z}(s))ds
\end{align*}
for some stopping time $\mathcal{T}.$ Then a posteriori we have to recover $v\in C([0,\mathcal{T}],L^q(\R^d))$. Let us recall that 
\begin{align*}
    r=q_c(p\vee p')>q_c,
\end{align*}
Therefore $C([0,\mathcal{T}];L^r(\R^d))$ is a subcritical space for \eqref{Eq:Heat}.
We divide the proof into two steps according to the plan described above.\\
\emph{Step 1: Fixed Point.}
Let us observe that in case of $q\geq 2,\ u_0\in L^2(\R^d)$, while, in case of $q<2$, under our assumptions $u_0\in H^{-s_0}(\R^d)$ with 
\begin{align*}
    s_0=\frac{d(2-q)}{2q}<\frac{2}{p}
\end{align*}
by Sobolev embedding. Therefore in both cases we can apply \autoref{reg_stoch_conv} to properly estimate $\hat{z}$ when needed. Moreover, due to \autoref{reg_initial_condition} and standard regularizing properties of the heat semigroup, we have $\hat{z}\in C_{loc}([0,+\infty);L^q(\R^d))\cap C_{loc}([\varepsilon,+\infty);W^{s,q}(\R^d))$ $ \mathbb{P}-a.s.$ for each $s,\varepsilon>0.$ We want to construct a fixed point of the map 
\begin{align}\label{map_fixed_point}
  \Gamma[v](t):=\int_0^t e^{(t-s)\Delta }\lvert v(s)+\hat{z}(s)\rvert^{p-1}(v(s)+\hat{z}(s))ds  
\end{align}
in the closed ball of radius $M$ of $C([0,\hat{T}];L^r(\R^d))$ for sufficiently small $M$ and $\hat{T}\leq 1.$ We shall denote this closed ball by $B_{M,\hat{T}}$ in the following and split the proof in two sub-cases if $q\geq 2$ or not.\\
\emph{Step 1, Case 1: $ q\geq 2$.}
First we show that $\Gamma$ maps $B_{M,\hat{T}}$ in itself. By ultracontractivity of the heat semigroup and Young's inequality we have
\begin{align*}
    \norm{\Gamma[v](t)}_{L^r}&\leq C_{d,p,r}\int_0^t \left(\frac{1}{(t-s)^{\frac{d(p-1)}{2r}}}\norm{v(s)}_{L^r}^p+\norm{\hat{z}(s)}_{L^{rp}}^p\right)ds\\ & \leq C_{d,p,r} \left(M^p \hat{T}^{1-\frac{d(p-1)}{2r}}+\hat{T}^{1/2}\norm{\hat{z}}_{L^{2p}(0,\hat{T};L^{rp})}^p\right),
\end{align*}
where $C_{d,p,r}$ is a finite constant, possibly changing value line by line. In the last step we used the definition $B_{M,\hat{T}},$ the fact that $r>q_c$ and \autoref{reg_stoch_conv} for $\alpha=\sigma=\gamma=0.$ Therefore
$\norm{\Gamma[v](t)}_{L^r}\leq M$
if $M,\ \hat{T}$ are sufficiently small, for example 
\begin{align*}
    M=\frac{1}{(2C_{d,p,r})^{1/(p-1)}},\ \hat{T}=\operatorname{inf}\left\{t>0: C_{d,p,r} t^{1/2}\norm{\hat{z}}_{L^{2p}(0,t;L^{rp})}^p\geq \frac{M}{2}\right\}.
\end{align*}
The latter is a stopping time due to the $\mathcal{F}_0$ measurability of $u_0^{\omega}$. The continuity of $\Gamma[v](t)$ follows similarly.
Now we are left to show that $\Gamma$ is a contraction. We employ the elementary estimate
\begin{align}\label{elementary_estimate}
    \left\lvert\lvert x+y\rvert^{p-1}(x+y)-\lvert x+z\rvert^{p-1}(x+z)\right\rvert&\lesssim_p \left(\lvert x\rvert^{p-1}+\lvert y\rvert^{p-1}+\lvert z\rvert^{p-1}\right)\lvert y-z\rvert
\end{align}
in the following. Let us analyze $\Gamma[v_1]-\Gamma[v_2]$. Inequality \eqref{elementary_estimate} together with ultracontractivity of the heat semigroup and H\"older inequality imply
\begin{align*}
    \norm{\Gamma[v_1](t)-\Gamma[v_2](t)}_{L^r}& \leq C_{d,p,r}\int_0^t \frac{\norm{\left(\lvert \hat{z}(s)\rvert^{p-1}+\lvert v_1(s)\rvert^{p-1}+\lvert v_2(s)\rvert^{p-1}\right)\lvert v_1(s)-v_2(s)\rvert}_{L^{r/p}}}{(t-s)^{\frac{d(p-1)}{2r}}} ds\\ & \leq C_{d,p,r} \left(\hat{T}^{1-\frac{d(p-1)}{2r}}M^{p-1}+\hat{T}^{1-\frac{d(p-1)}{2(r-p+1)}}\norm{\hat{z}}_{L^r(0,\hat{T};L^r)}^{p-1}\right)\norm{v_1-v_2}_{C([0,\hat{T}];L^r)}.
\end{align*}
Therefore
$\Gamma[v]$ is a contraction on $B_{M,\hat{T}}$ if $M,\hat{T}$ are sufficiently small, for example 
\begin{align*}
    {M}&=\frac{1}{(2C_{d,p,r})^{1/(p-1)}},\\ \hat{T}&=\operatorname{inf}\left\{t>0: C_{d,p,r} t^{1-\frac{d(p-1)}{2(r-p+1)}} \norm{z}_{L^{r}(0,t;L^{r})}^{p-1}\geq \frac{1}{4}\right\}\wedge \operatorname{inf}\left\{t>0: C_{d,p,r}t^{1/2} \norm{z}_{L^{2p}(0,t;L^{rp})}^p\geq \frac{M}{2}\right\}.
\end{align*}
Again, the latter is a stopping time due to the $\mathcal{F}_0$ measurability of $u_0^{\omega}$. Let us call $\mathcal{T}=\hat{T}$, this is the stopping time in the statement of \autoref{Fixed_point_random_initial_cond}. Lastly, by the last part of \autoref{reg_stoch_conv}, up to renaming the constants appearing there, there exist two constants $C^1_{d,p,r}, C^2_{d,p,r}$ such that
\begin{align*}
    \mathbb{P}(\mathcal{T}\geq T)\geq 1- C^1_{d,p,r} e^{-\frac{C^2_{d,p,r}}{T^{\frac{1}{p}\wedge\left(\frac{2}{p-1}-\frac{d}{r-p+1}\right)}\norm{u_0}_{L^2}^2}}=1- C^1_{d,p,r} e^{-\frac{C^2_{d,p,r}}{T^{\frac{1}{p}}\norm{u_0}_{L^2}^2}},
\end{align*}
since by elementary computations one can check that 
\begin{align*}
  \frac{1}{p} \leq  \frac{2}{p-1}-\frac{d}{r-p+1}
\end{align*}
under the assumption $r=q_c(p\vee p').$\\
\emph{Step 1, Case 2: $2-\frac{8}{4+dp} <q< 2$.}
First we show that $\Gamma$ maps $B_{M,\hat{T}}$ into itself. By ultracontractivity of the heat semigroup, the definition $B_{M,\hat{T}} $ and H\"older's inequality we have
\begin{align*}
    \norm{\Gamma[v](t)}_{L^r}&\leq C_{d,p,r}\int_0^t \left(\frac{1}{(t-s)^{\frac{d(p-1)}{2r}}}\norm{v(s)}_{L^r}^p+\frac{\norm{ s^{\gamma}\hat{z}(s)}_{L^{rp}}^p}{s^{\gamma p}}\right)ds\\ & \leq C_{d,p,r,\gamma,\alpha} \left(M^p \hat{T}^{1-\frac{d(p-1)}{2r}}+\hat{T}^{\frac{\alpha-1}{\alpha}\left(1-\gamma p\frac{\alpha}{\alpha-1}\right)}\norm{t^{\gamma}\hat{z}(t)}_{L^{\alpha p}(0,\hat{T};L^{rp})}^p\right),
\end{align*}
where $C_{d,p,r,\gamma,\alpha}$ is a finite constant if it is possible to find $\alpha\in [1,+\infty],\ \gamma\in \R$ such that
\begin{align*}
    \begin{cases}
        r\geq \alpha\geq 2,\\
        (s_0-2\gamma)\alpha p<2,\\
        \gamma p\frac{\alpha}{\alpha-1}<1.
    \end{cases}
\end{align*} 
By setting 
\begin{align*}
    \gamma=\frac{\alpha-1}{p\alpha}-\frac{\epsilon}{2\alpha p}.
\end{align*}
for $\epsilon>0$ the third condition is immediately satisfied and we are left to find $\alpha$ and $\epsilon$ such that
\begin{align*}
    \begin{cases}
        r\geq \alpha\geq 2,\\
       \alpha(-2+s_0 p)<-\epsilon.
    \end{cases}
\end{align*} 
Since under our assumptions on $q$ we have $(s_0p -2)<0$, all the conditions above are satisfied choosing 
\begin{align*}
    \alpha=2,\quad \epsilon=2(2-s_0 p)\left(1-\frac{\epsilon' }{2-s_0p}\right),\quad \gamma=\frac{s_0 p-1+\epsilon'}{2p},\quad 0<\epsilon'<\frac{\alpha(2-s_0p)}{2}.
\end{align*}
In conclusion we showed 
\begin{align*}
    \norm{\Gamma[v](t)}_{L^r}& \leq C_{d,p,r,\epsilon'} \left(M^p \hat{T}^{1-\frac{d(p-1)}{2r}}+\hat{T}^{\frac{2-s_0p }{4}}\norm{t^{\frac{s_0 p-1+\epsilon'}{2p}}\hat{z}(t)}_{L^{2 p}(0,\hat{T};L^{rp})}^p\right). 
\end{align*}
In particular, due to \autoref{reg_stoch_conv}, the quantity above is always finite and
$\norm{\Gamma[v](t)}_{L^r}\leq M$
by choosing $M,\ \hat{T}$ sufficiently small, for example 
\begin{align*}
    M=\frac{1}{(2C_{d,p,r,\epsilon'})^{1/(p-1)}},\ \hat{T}=\operatorname{inf}\left\{t>0: C_{d,p,r,\epsilon'} t^{\frac{2-s_0p }{4}}\norm{s^{\frac{s_0 p-1+\epsilon'}{2p}}\hat{z}}_{L^{2 p}(0,t;L^{rp})}^p\geq \frac{M}{2}\right\}.
\end{align*}
The latter is a stopping time due to the $\mathcal{F}_0$ measurability of $u_0^{\omega}$. The continuity of $\Gamma[v](t)$ follows similarly.
Now we are left to show that $\Gamma$ is a contraction.
Let us analyze $\Gamma[v_1]-\Gamma[v_2]$. By \eqref{elementary_estimate}, ultracontractivity of the heat semigroup and H\"older inequality we have, arguing similarly as above,
\begin{align}\label{contraction_estimate}
   & \norm{\Gamma[v_1](t)-\Gamma[v_2](t)}_{L^r} \leq C_{d,p,r}\int_0^t \frac{\norm{\left(\lvert \hat{z}(s)\rvert^{p-1}+\lvert v_1(s)\rvert^{p-1}+\lvert v_2(s)\rvert^{p-1}\right)\lvert v_1(s)-v_2(s)\rvert}_{L^{r/p}}}{(t-s)^{\frac{d(p-1)}{2r}}} ds\\  &\leq C_{d,p,r} \left(\hat{T}^{1-\frac{d(p-1)}{2r}}M^{p-1}+\left(\int_0^{\hat{t}} \frac{1}{s^{\frac{\alpha'}{\alpha'-1}\gamma'(p-1)}(t-s)^{\frac{d(p-1)\alpha}{2r(\alpha-1)}}}ds\right)^{\frac{\alpha-1}{\alpha}}\norm{t^{\gamma'}\hat{z}}_{L^{\alpha'(p-1)}(0,\hat{T};L^r)}^{p-1}\right)\notag\\ & \quad \quad\quad\quad\times\norm{v_1-v_2}_{C([0,\hat{T}];L^r)}\notag.
\end{align}
According to \autoref{reg_stoch_conv}, the latter is a treatable quantity, finite $\mathbb{P}$-almost surely, as soon as we can find $\alpha'$ and $\gamma'$ such that
\begin{align*}
    \begin{cases}
        r\geq \alpha'(p-1)\geq 2\\
        \frac{d(p-1)\alpha'}{2r(\alpha'-1)}<1\\
        \frac{\alpha'}{\alpha'-1}\gamma'(p-1)<1\\
        (s_0-2\gamma')\alpha'(p-1)<2.
    \end{cases}
\end{align*}
Considerations analogous to the ones above allow us to find the choice of parameters 
\begin{align*}
    \alpha'=p'\vee \frac{2}{p-1},\quad \gamma'=\frac{\alpha'-1}{\alpha'(p-1)}-\frac{\alpha'(2-s_0(p-1))-\epsilon'}{2\alpha(p-1)},\quad 0<\epsilon'<\alpha'(2-s_0(p-1)),
\end{align*}
where $p'$ is the conjugate of $p$.
In particular, under our choice of the parameters
\begin{align*}
   \int_0^t \frac{1}{s^{\frac{\alpha'}{\alpha'-1}\gamma'(p-1)}(t-s)^{\frac{d(p-1)\alpha}{2r(\alpha-1)}}}ds & \lesssim \left(t^{\frac{\alpha'}{2(\alpha'-1)}}\right)^{2-s_0(p-1)-\epsilon'\alpha'-\frac{2 }{p\vee p'}}.
\end{align*}
By elementary computations one can check that 
\begin{align*}
    2-s_0(p-1)-\frac{2 }{p\vee p'}> 2-\frac{2}{p}(p-1)-\frac{2 }{p\vee p'}\geq 0.
\end{align*}
Therefore, if we choose
\begin{align}\label{condition_small_epsilon}
    0<\epsilon'<\frac{2-s_0(p-1)-\frac{2 }{p\vee p'}}{2\alpha'}\wedge \alpha'(2-s_0(p-1))\wedge \frac{\alpha(2-s_0p)}{2}
\end{align}
estimate \eqref{contraction_estimate} reduces to
\begin{align*}
&\norm{\Gamma[v_1](t)-\Gamma[v_2](t)}_{L^r} \\ & \leq 
   C_{d,p,r,q} \left(\hat{T}^{1-\frac{d(p-1)}{2r}}M^{p-1}+\hat{T}^{\frac{2-s_0(p-1)-\frac{2 }{p\vee p'}}{4}}\norm{t^{\gamma'}\hat{z}}_{L^{\alpha'(p-1)}(0,\hat{T};L^r)}^{p-1}\right)\norm{v_1-v_2}_{C([0,\hat{T}];L^r)}. 
\end{align*}
Therefore
$\Gamma[v]$ is a contraction on $B_{M,\hat{T}}$ if we choose $M,\hat{T}$ sufficiently small, for example 
\begin{align*}
    {M}&=\frac{1}{(2C_{d,p,r})^{1/(p-1)}},\\ \hat{T}&=\operatorname{inf}\left\{t>0: C_{d,p,q,r} 
    t^{\frac{2-s_0(p-1)-\frac{2 }{p\vee p'}}{4}}\norm{s^{\gamma'}\hat{z}}_{L^{\alpha'(p-1)}(0,t;L^r)}^{p-1}\geq \frac{1}{4}\right\}\\ & \wedge \operatorname{inf}\left\{t>0: C_{d,p,r,\epsilon'} t^{\frac{2-s_0p }{4}}\norm{s^{\frac{s_0 p-1+\epsilon'}{2p}}\hat{z}}_{L^{2 p}(0,t;L^{rp})}^p\geq \frac{M}{2}\right\},
\end{align*}
the latter being a stopping time due to the $\mathcal{F}_0$ measurability of $u_0^{\omega}$. Let us call $\mathcal{T}=\hat{T}$. This is the stopping time in the statement of \autoref{Fixed_point_random_initial_cond}. Moreover, by the last part of \autoref{reg_stoch_conv}, up to renaming the constant appearing there, there exist two constants $C^1_{d,p,q,r}, C^2_{d,p,q,r}$ such that
\begin{align*}
    \mathbb{P}(\mathcal{T}\geq T)\geq 1- C^1_{d,p,r} e^{-\frac{C^2_{d,p,r}}{T^{\frac{1}{p}\wedge\left(\frac{2}{p-1}-\frac{d}{r-p+1}\right)}\norm{u_0}_{H^{-s_0}}^2}}\geq 1- C^1_{d,p,r} e^{-\frac{C^2_{d,p,q,r}}{T^{\frac{1}{p}\wedge\left(\frac{2}{p-1}-\frac{d}{r-p+1}\right)}\norm{u_0}_{M^{q,q}}^2}}.
\end{align*}

\emph{Step 2: $u\in C([0,\mathcal{T}],L^q(\R^d))$ and it is an adapted process.} A priori, since we are working on $\R^d$, the regularity of $v$ does not imply that $u\in C([0,\mathcal{T}],L^q(\R^d))$. However, arguing similarly to \autoref{Continuity_uniform_bound} and exploiting the regularity of $e^{t\Delta }u_0^{\omega}$ this can be proved. Since the proof of this claim is pretty similar to \autoref{Continuity_uniform_bound}, we only sketch the details. We note that $\hat{z}(t):=e^{t\Delta }u_0^{\omega}\in C_{loc}([0,+\infty),L^q(\R^d))\quad \mathbb{P}-a.s.$ and it is adapted. Therefore it is enough to show that the claim holds for $v=u-\hat{z}.$ Let us introduce a radially symmetric cut off $\chi\in C^{\infty}_c(\R^d)$ by
		\begin{align*}
			\chi(x):=\begin{cases}
				1\quad \text{if } \lvert x\rvert\leq 1\\
				0\quad \text{if } \rvert x\rvert\geq 2,
			\end{cases}\quad \chi(x)\geq 0,\quad \chi(x)\leq \chi(x') \quad \text{if} \quad \lvert x\rvert\geq \lvert x'\rvert.
		\end{align*}
		Furthermore, for $x_0\in \R^d,\, R\geq 1$, define $\chi_{x_0, R}(x):=\chi\left(\frac{x-x_0}{R}\right).$ Due to the properties of $v$, we have that for each $x_0\in \R^d,\, R\geq 1$ \begin{align}\label{regularity_cutoff_random_IC}
			v\chi_{x_0,R}\in C([0,\mathcal{T}],L^q(\R^d))\cap C([0,\mathcal{T}],L^r(\R^d))\quad \mathbb{P}-a.s.,
		\end{align}            
		and for each $t,t_0\in (0,\mathcal{T})$ with $t_0\leq t$, that
		\begin{align*}
			v(t)\chi_{x_0,R}= \, &e^{(t-t_0)\Delta }v(t_0)\chi_{x_0,R}+\int_{t_0}^t e^{(t-s)\Delta }\left(v(s)\Delta\chi_{x_0,R}-2\operatorname{div}(v(s)\nabla \chi_{x_0,R})\right) ds\\ & +\int_{t_0}^t e^{(t-s)\Delta } \chi_{x_0,R} \lvert v(s)+\hat{z}(s)\rvert^{p-1}(v(s)+\hat{z}(s)) ds,
		\end{align*}
		in  $L^q(\R^d)$.
		Considering the $L^q$ norm of the equation above, by triangle inequality and the definition of $\chi_{x_0,R}$ we get easily  that
		\begin{align*}
			\norm{ v(t)}_{L^q(B_{R}(x_0))}\lesssim \, & \norm{v(t_0)\chi_{x_0,R}}_{L^q(\R^d)}+\int_{t_0}^t \left(1+\frac{1}{\sqrt{t-s}}\right) \norm{ v(s)}_{L^q(B_{2R}(x_0))}ds\\ & +\int_{t_0}^t\norm{ \chi_{x_0,R} \lvert v(s)+\hat{z}(s)\rvert^{p-1}(v(s)+\hat{z}(s))}_{L^q(\R^d)} ds .
		\end{align*}
		Here the hidden constant is independent of $R$, $t_0,$ $t$ and $x_0$. Let us analyze further the last term.
		By interpolation and Young's inequality, since $r>pq$, we have
		\begin{align*}
			\norm{ \chi_{x_0,R} \lvert v(s)+\hat{z}(s)\rvert^{p-1}(v(s)+\hat{z}(s))}_{L^q(\R^d)}\lesssim \, & \norm{ \chi^{1/p}_{x_0,R} v(s)}_{L^{pq}(\R^d)}^p+\norm{\hat{z}(s)}_{L^{q,r}(\R^d)}^p\\ \leq \, & \norm{ \chi^{1/p}_{x_0,R} v(s)}_{L^{q}(\R^d)}^{\frac{r-pq}{r-q}}\norm{  v(s)}_{L^{r}(\R^d)}^{\frac{r(p-1)}{r-q}}+\norm{\hat{z}(s)}_{L^{q,r}(\R^d)}^p\\ \lesssim \, & \norm{  v(s)}_{L^{q}(B_{2R}(x_0))} +\norm{v}_{C([0,\mathcal{T}],L^r(\R^d))}^{\frac{r}{q}}+\norm{\hat{z}(s)}_{L^{q,r}(\R^d)}^p.
		\end{align*}
		Therefore by \autoref{reg_stoch_conv}, arguing similarly to \emph{Step 1} above,  
		\begin{align*}
			\norm{ v(t)}_{L^q(B_{R}(x_0))}\lesssim \,& 1+\norm{v(t_0)\chi_{x_0,R}}_{L^q(\R^d)}+\int_{t_0}^t \left(1+\frac{1}{\sqrt{t-s}}\right) \norm{ v(s)}_{L^q(B_{2R}(x_0))}ds.
		\end{align*}
        Possibly the hidden constant above depends on $\omega,$ but this is inessential.
		Due to relation \eqref{regularity_cutoff_random_IC}, since $v(0)=0$, we can let $t_0\rightarrow 0$ to get
		\begin{align*}
			\norm{ v(t)}_{L^q(B_{R}(x_0))}&\lesssim 1+\int_{0}^t \left(1+\frac{1}{\sqrt{t-s}}\right) \norm{ v(s)}_{L^q(B_{2R}(x_0))}ds.   
		\end{align*}
        Therefore $v\in L^{\infty}(0,\mathcal{T};L^q(\R^d))\quad \mathbb{P}-a.s.$ arguing verbatim as in the proof of \autoref{Continuity_uniform_bound}. In order to show the continuity of $v(t)$ in $L^q$, we use again the mild formulation for $v(t)$. Namely, for each $0\leq t_1\leq t_2\leq \mathcal{T}$, we have 
		\begin{align*}
			v(t_2)\chi_{x_0,R}-v(t_1)&=\left(e^{(t_2-t_1)\Delta }v(t_1)\chi_{x_0,R}-v(t_1)\right)\\ &+\int_{t_1}^{t_2} e^{(t-s)\Delta }\left(v(s)\Delta\chi_{x_0,R}-2\operatorname{div}(v(s)\nabla \chi_{x_0,R})\right) ds\\ & +\int_{t_1}^{t_2} e^{(t-s)\Delta } \chi_{x_0,R} \lvert v(s)+\hat{z}(s)\rvert^{p-1}(v(s)+\hat{z}(s)) ds,
		\end{align*}
		in $L^q(\R^d)$.   
		Taking the $L^q$-norm of the expression above and letting $R\rightarrow +\infty$, we obtain, by analogous considerations to the ones employed to obtain the uniform bound on the $L^q$-norm
		\begin{align*}
			\norm{v(t_2)-v(t_1)}_{L^q(\R^d)} = \, &\limsup_{R\rightarrow +\infty}\norm{v(t_2)\chi_{x_0,R}-v(t_1)}_{L^q(\R^d)}\\  \leq \, & \norm{e^{(t_2-t_1)\Delta }v(t_1)-v(t_1)}_{L^q(\R^d)}\\ & +\sup_{R\geq 1}\norm{\int_{t_1}^{t_2} e^{(t-s)\Delta }\left(v(s)\Delta\chi_{x_0,R}-2\operatorname{div}(v(s)\nabla \chi_{x_0,R})\right)ds}_{L^q(\R^d)}\\ & +\sup_{R\geq 1}\norm{\int_{t_1}^{t_2} e^{(t-s)\Delta } \chi_{x_0,R} \lvert v(s)+\hat{z}(s)\rvert^{p-1}(v(s)+\hat{z}(s)) ds }_{L^q(\R^d)}\\  \lesssim\, & \norm{e^{(t_2-t_1)\Delta }v(t_1)-v(t_1)}_{L^q(\R^d)}\\ & +\int_{t_1}^{t_2} \left(1+\frac{1}{\sqrt{t_2-s}}\right) \norm{ v(s)}_{L^q(\R^d)}ds+\int_{t_1}^{t_2} \norm{z(s)}_{L^{pq}}^p ds.
		\end{align*}
		Since we already proved that $v\in L^{\infty}(0,\mathcal{T},L^q(\R^d))\quad \mathbb{P}-a.s.,$ the continuity of $v$ then follows from the last inequality by \autoref{reg_stoch_conv} and arguing as in \emph{Step 1} above. \\
        Regarding measurability, the result follows exactly as in the last part of \autoref{Continuity_uniform_bound}.

\subsection*{Acknowledgements}
    The author is grateful to Martina Hofmanov\'a for valuable discussions on the topic and to Antonio Agresti for pointing out the reference \cite{van2008stochastic_lec}. The author also thanks Umberto Pappalettera for fruitful discussions and helpful comments on a preliminary version of the paper, which improved the presentation of the results.  
    The author has received funding from the European Research Council (ERC) under the European Union’s Horizon 2020 research and innovation programme (grant agreement No. 949981). 
	\bibliography{refs.bib}
	\bibliographystyle{plain}
\end{document}